  \pgfplotsset{compat=newest}
\newcommand{\Rset}{\mathbb{R}}
\newcommand{\Uset}{\mathbb{U}}
\newcommand{\Yset}{\mathbb{Y}}
\newcommand{\bu}{\mathbf{u}}
\newcommand{\by}{\mathbf{y}}
\newcommand{\bg}{\mathbf{g}}
\definecolor{Matlab_blue}{rgb}{0, 0.4470, 0.7410}
\definecolor{Matlab_orange}{rgb}{0.8500, 0.3250, 0.0980}
\definecolor{Matlab_yellow}{rgb}{0.9290 0.6940 0.1250}
\definecolor{green_dark}{rgb}{0.0, 0.5, 0.0}
\definecolor{DeePONet_purple}{rgb}{0.701961,0.701961,1.00000}%
\definecolor{DeePONet_gray}{rgb}{0.850980,0.8509801,0.850980}%
\definecolor{codegreen}{rgb}{0,0.6,0}
\definecolor{codegray}{rgb}{0.5,0.5,0.5}
\definecolor{codepurple}{rgb}{0.58,0,0.82}
\definecolor{backcolour}{rgb}{0.95,0.95,0.92}
\definecolor{darkgreen}{rgb}{0,0.5,0}
\lstdefinestyle{mystyle}{
    backgroundcolor=\color{backcolour},   
    commentstyle=\color{codegreen},
    keywordstyle=\color{magenta},
    numberstyle=\tiny\color{codegray},
    stringstyle=\color{codepurple},
    basicstyle=\ttfamily\footnotesize,
    breakatwhitespace=false,         
    breaklines=true,                 
    captionpos=b,                    
    keepspaces=true,                 
    numbers=left,                    
    numbersep=5pt,                  
    showspaces=false,                
    showstringspaces=false,
    showtabs=false,                  
    tabsize=2
}
\newcommand{\col}{\operatorname{col}}
\newcommand{\diag}{\operatorname{diag}}
\newtheorem{theorem}{Theorem}
\newtheorem{problem}{Problem}
\newtheorem{lemma}{Lemma}
\newtheorem{proposition}{Proposition}
\newtheorem{remark}{Remark}
\begin{document}

\raggedbottom 

\sptitle{Article Category}

\title{Deep Operator Neural Network \\ Model Predictive Control}

\author{T.O. de Jong\affilmark{1} (Student Member, IEEE)}

\author{K. Shukla\affilmark{2}}

\author{M. Lazar\affilmark{1} (Senior Member, IEEE)}

\affil{Eindhoven University of Technology, The Netherlands}
\affil{Brown University, The United States}

\corresp{CORRESPONDING AUTHOR: T.O.~de~Jong (e-mail: \href{mailto:t.o.d.jong@tue.nl}{t.o.d.jong@tue.nl})}

\markboth{Deep Operator Neural Network Model Predictive Control}{T.O. de Jong {\itshape et al}.}

\begin{abstract}
In this paper, we consider the design of model predictive control (MPC) algorithms based on deep operator neural networks (DeepONets) \cite{DeepONet}. 
These neural networks are capable of accurately approximating real-and complex-valued solutions \cite{jiang2024complex} of continuous-time nonlinear systems without relying on recurrent architectures. The DeepONet architecture is made up of two feedforward neural networks: the branch network, which encodes the input function space, and the trunk network, which represents dependencies on temporal variables or initial conditions. Utilizing the original DeepONet architecture \cite{DeepONet} as a predictor within MPC for Multi-Input-Multi-Output (MIMO) systems requires multiple branch networks, to generate multi-output predictions, one for each input. Moreover, to predict multiple time steps into the future, the network has to be evaluated multiple times. Motivated by this, we introduce a multi-step DeepONet (MS-DeepONet) architecture that computes in one-shot multi-step predictions of system outputs from multi-step input sequences, which is better suited for MPC. We prove that the MS-DeepONet is a universal approximator in terms of multi-step sequence prediction. Additionally, we develop automated hyper parameter selection strategies and implement MPC frameworks using both the standard DeepONet and the proposed MS-DeepONet architectures in PyTorch. The implementation is publicly available on \href{https://github.com/todejong/Deep-Operator-Neural-Network-Model-Predictive-Control.git}{GitHub}. Simulation results demonstrate that MS-DeepONet consistently outperforms the standard DeepONet in learning and predictive control tasks across several nonlinear benchmark systems: the van der Pol oscillator, the quadruple tank process, and a cart–pendulum unstable system, where it successfully learns and executes multiple swing-up and stabilization policies.
\end{abstract}

\begin{IEEEkeywords}
Model predictive control, Deep operator neural networks, Nonlinear dynamical systems, Multi-step prediction, Constrained control.
\end{IEEEkeywords}

\maketitle

\section{INTRODUCTION}
\label{sec1}
Merging the strong expressivity of neural networks (NNs) with model predictive control (MPC) is an effective approach for control design in the case of complex nonlinear systems, which are difficult to model using first principles alone. Early works in neural predictive control used fully-connected feedforward neural networks to construct nonlinear autoregressive exogenous (NARX) type of one-step models, which were then simulated to obtain multi-step output predictions, see, for example, \cite{lazar2002neural, lawrynczuk2009neural, Norgaard_2000}, and the references therein. To reduce the growth of prediction errors caused by one-step modeling errors, in \cite{lawrynczuk2009neural, masti2020learning} separate input-output predictors for each time step have been introduced. These predictors can be trained jointly or separately. The structured multi-step predictor architecture is suitable for minimizing the multi-step prediction error during the training performed offline and for online predictive control, as future system outputs can be directly inferred without simulating one-step models. Recently, inspired by the deep prediction network  architecture \cite{dalla2022deep} for system identification, \cite{Deep_Prediction_Nets_Lazar} classified existing prediction network architectures and developed a new deep subspace architecture with improved performance and robustness to noisy data.

Stimulated by recent developments in recurrent neural networks architectures suited for sequence modeling, such as Long-short Term Memory (LSTM) \cite{hochreiter1997long} and Gated recurrent Unit (GRU) \cite{cho2014learning} based neural networks, recent works in neural predictive control focused on adopting such advanced architectures and providing formal guarantees for stability shown in research articles \cite{Bonassi2021, Zarzycki2022, Lalo_2024, Irene_LTSM, Masero2024, Schimperna2024 } and the survey papers \cite{Bonassi_2022, Survey_NN_MPC}. Such recurrent neural networks demonstrate superior modeling performance, as they can reproduce nonlinear state-space models, but are more challenging to train. Also, using recurrent neural networks within predictive control requires careful initialization of the network internal state, which is non-trivial, i.e., arbitrary initialization can lead to transients and loss of prediction accuracy. The popular framework of physics-informed neural networks (PINNs) has also been recently adopted in predictive control, see, for example, \cite{PINNs_MPC, Ramp_net_MPC_PINNs}. In these works, PINNs are used to learn one-step prediction models that are simulated to obtain future system output over a finite prediction horizon. Other works have also addressed the real-time implementation aspects of neural predictive control, see, for example, \cite{Realtime_Neural_MPC}.

In conventional regression tasks, neural networks learn a function mapping between two finite-dimensional spaces. Recently, two approaches—Deep Operator Network (DeepONet) \cite{DeepONet} and Fourier Neural Operator \cite{li2020fourier}-based architectures—have been proposed to model the mapping between infinite-dimensional function spaces. The  DeepONet architecture \cite{DeepONet} is inspired by \cite{Chen_ONN}, and it provides a universal approximator of linear and nonlinear operators. The architecture of DeepONet consists of a branch deep neural network (typically a feedforward network) and a trunk neural network (typically a feedforward network) which are combined to learn a real-valued operator, e.g., the solution of a nonlinear dynamical system. For instance, in the case of a time-dependent differential operator, the branch network receives inputs as a function evaluated at various time instances, while the trunk network takes as input the time instances along with the initial conditions or states. The final output of a DeepONet is produced by combining the outputs of the branch and trunk networks along the dimension of their last layers. The training of the DeepONet is carried out through a carefully constructed loss function that measures the difference between the DeepONet predictions and the labeled data. Once the training is complete, the solution to a linear or nonlinear system is inferred by performing a forward pass of the DeepONet using unseen initial conditions, desired time instants and control inputs. 

DeepONets offer an attractive alternative to NARX-type feedforward neural networks and state-space recurrent neural networks, as the training process is similar to that of feedforward neural networks, but the learning accuracy is virtually unlimited for solutions of continuous nonlinear systems operating on compact domains. So far, DeepONet has been successfully applied to various engineering and scientific problems (e.g., \cite{shukla2024deep, oommen2022learning, shukla2024deepII, jiang2024complex, cai2021deepm}) and to control of partial differential equations and time-delay systems \cite{Miroslav_2024, Miroslav_2024_Auto, Miroslav_2025_predictor}. In \cite{Miroslav_2025_predictor}, in particular, the authors use a DeepONet to predict the state of the system at a future moment in time, in order to stabilize a differential equation with input delay. However, to the best of our knowledge, the implementation of the DeepONet  in model predictive control has not yet been investigated.

In this paper, we therefore explore the potential of employing the DeepONet in predictive control. The first challenge deals with the fact that multiple-input-multiple-output (MIMO) nonlinear systems necessitate an extension of the DeepONet architecture to handle MIMO operators. Additionally, we aim to develop multi-step DeepONet predictors that can forecast future system outputs without relying on simulation. Existing extensions of the DeepONet to multiple inputs, such as MiONet \cite{MiONet} and MimoONet \cite{MimoONet}, address different problems of learning operators with multiple input functions and outputs, and involve multiple branch networks, which complicate the architecture and increase its complexity. This is not ideal for predictive control, where an effective predictor architecture is crucial.

The first contribution of this work is the development of the multistep DeepONet (MS-DeepONet) architecture, an adaptation of the traditional DeepONet designed to predict the behavior of the system at multiple future time points in one shot (i.e., one network evaluation). Given an initial state $x(t_0)$ and a discrete input sequence $\{u(t_0), \dots, u(t_{N-1})\}$, the model predicts the corresponding output sequence $\{y(t_1), \dots, y(t_N)\}$. This structure enables efficient computation and makes the model well-suited for online use in model predictive control (MPC), where fast and accurate multi-step predictions are essential. By suitably adapting the results of \cite{Chen_ONN}, we prove that the developed MS-DeepONet is a universal multi-step approximator. Hence, we prove that multi-step output sequences generated by multi-step input sequences can be approximated with arbitrary accuracy. 

The second major contribution is the derivation of an equivalent basis representation of MS-DeepONet, enabling a more efficient MPC formulation. This perspective builds on prior work interpreting the final hidden layer of a neural network as an implicit adaptive basis \cite{Widrow_2013}. The resulting basis interpretation highlights both computational and structural advantages of MS-DeepONet over conventional architectures in predictive control. It also facilitates the design of data-enabled predictive controllers based on DeepONet, extending concepts previously applied to feedforward networks \cite{Neural_Basis_Lazar}.

The final contribution of this work is the development of optimal hyper parameter selection algorithms for both the standard DeepONet and the proposed MS-DeepONet for learning dynamical systems form data. In addition, we design MPC schemes that incorporate both the conventional MIMO-DeepONet and MS-DeepONet architectures. All learning algorithms are implemented in PyTorch \cite{PyTorch2019}, with corresponding learning algorithms and MPC implementations provided in Python. 

After introducing several notations and basic definitions, the remainder of this paper is structured as follows. Section~\ref{sec2} provides the preliminary results of DeepONet and the problem statement. In Section~\ref{sec3}, we introduce the multi-step DeepONet architecture and present the equivalent basis representations for both the standard unstacked DeepONet and the multi-step DeepONet. We also present a universal approximation theorem for the MS-DeepoNet for learing solutions of continuous--time dynamical systems, given that the input signal is a piecewise constant finite length signal. Section~\ref{sec:ablation_study} presents learning algorithms for the MS-DeepONet, alongside an ablation study designed to identify the optimal hyper parameters of the network. In Section~\ref{sec6}, we demonstrate how the DeepONet and MS-DeepONet architectures can be integrated as prediction models within MPC algorithms. Section~\ref{sec:examples} presents illustrative examples to validate the proposed approaches, and Section~\ref{sec:conclusions} concludes with a summary of results and future work.

\paragraph*{Notation and basic definitions}
For any finite collection of \( q \in \mathbb{N}_{\geq 1} \) vectors \( \xi_1, \dots, \xi_q \in \mathbb{R}^{n_1} \times \dots \times \mathbb{R}^{n_q} \), we will make use of the operator $\text{col}(\xi_1, \dots, \xi_q) := \begin{bmatrix} \xi_1^\top & \dots & \xi_q^\top \end{bmatrix}^\top$ to indicate the vector stacking them in a column.  For two matrices $A \in \Rset^{n\times m}$, $B \in \Rset^{p \times q}$, we denote their Kronecker product by $A\otimes B$, defined as
$$
A \otimes B = \begin{bmatrix}
    a_{1,1} B & \dots & a_{1,m} B\\
    \vdots & & \vdots \\
    a_{n,1} B & \dots & a_{n,m} B
\end{bmatrix}.
$$
Moreover, we denote the vectorization of a matrix $A =\begin{bmatrix}
    a & b\\
    c & d
\end{bmatrix}$ by $\text{vec}({A}) = \col(a,c,b,d)$. A Hankel matrix constructed using an arbitrary dataset $\mathbf{z}$, with $N$ rows and $T$ columns starting at index $i$ is denoted as:
\begin{align*}
&\mathcal{H}^i_{[N,T]}(\mathbf{z}) := \\
&\begin{bmatrix}
    z(i) & z(i+1) & \dots & z(i+T-1) \\
    z(i+1) & z(i+2) & \dots & z(i+T) \\
    \vdots & \vdots & & \vdots \\
    z(i+N-1) & z(i+N) & \dots & z(i+T+N-2) \\
\end{bmatrix},
\end{align*}
where the samples in $\mathbf{z}$ can comprise information from multiple channels (e.g., multiple inputs or outputs), i.e., $z(i) = \col(z_1(i), z_2(i), \dots,z_r(i))$. For any vector $\mathbf{z}\in\mathbb{R}^q$ we denote by $[\mathbf{z}]_i$ the $i^{\text{th}}$ element of the vector and $[\mathbf{z}]^j_i$ is a vector containing the $i^{\text{th}}$ up to and including the $j^{\text{th}}$ element for all $i =1,\dots,q$. By $C(K)$ we denote the Banach space with norm $\|\cdot\|_{X}$ and $C(K)$ is the Banach space of all continuous functions defined on $K$ with $\|f\|_{C(K)}=\max_{x\in K}|f(x)|$. If a function $g:\mathbb{R}\rightarrow \mathbb{R}$ (continuous or discontinuous) satisfies that all the linear combinations $\sum_{i=1}^Nc_ig(\lambda_ix+\theta_i), \lambda_i\in\mathbb{R}$, $\theta_i\in\mathbb{R}$, $c_i\in\mathbb{R}$, $i=1,\dots,N$ are dense in every $C([a, b])$, then $g$ is called a Tauber-Wiener (TW) function.
\section{PRELIMINARIES}
\label{sec2}
We consider continuous-time nonlinear MIMO systems with inputs $u\in\mathbb{U}\subset \mathbb{R}^{n_u}$, measured states and outputs $x\in\mathbb{X}\subset \mathbb{R}^{n_x}$, $y\in\mathbb{Y}\subset \mathbb{R}^{n_y}$ where $\mathbb{U}$, $\mathbb{X}$ and $\mathbb{Y}$ are compact sets and time $t\in\mathbb{R}_+$, i.e.,
\begin{equation}\label{eqn:systems}
    \begin{aligned}
         \Dot{x}(t) &= f(x(t),u(t)), \\
        y(t) &= h(x(t)),
    \end{aligned}
\end{equation}
In the case of single-input single-output (SISO) systems, such a dynamical system can be regarded as an operator that maps input functions to output functions. In this regard,  let $G$ be an operator taking an input function $u$, and then $G(u)$ is the corresponding output function. For any point $z$ in the domain of $G(u)$, the output $G(u)(z) \in \Rset$ is a real number, where in the case of continuous-time systems of the form \eqref{eqn:systems} the variable $z:= \col(x(0),t)$ would consist of the initial condition $x(0)$ and time $t$. Our goal is to learn models for such operators from data. 
\begin{figure}[t!]
        \centering
        \begin{tikzpicture}[
Block_purp/.style={draw=black, fill = DeePONet_purple, very thick, minimum width=0.5cm, minimum height=0.5cm},
Blockwhite/.style={draw=black, fill = white , very thick, minimum width=0.5cm, minimum height=0.5cm},
Circ_gray_prod/.style={draw=black, fill=DeePONet_gray, shape=circle, thick, minimum width=0.7cm, minimum height=0.7cm, inner sep=0pt, outer sep=0pt},
node distance=10mm  
]

\tikzset{Circ_gray/.style={draw=black, fill=DeePONet_gray, shape=circle, thick, minimum width=0.7cm, minimum height=0.7cm, inner sep=0pt, outer sep=0pt}}

\tikzset{Circ_purp/.style={draw=black, fill=DeePONet_purple, shape=circle, thick, minimum width=0.7cm, minimum height=0.7cm, inner sep=0pt, outer sep=0pt}}

\node[draw, fill=white, very thick, minimum width=1cm, minimum height=3cm] (block_1) at (0, 0) {};

\node[draw, fill=white, very thick, minimum width=1cm, minimum height=3.85cm] (block_2) at (-1.5cm, 0) {};

\node[Circ_gray, anchor=center] (b1) at (block_1.north) [yshift=-0.45cm] {{\small $y_1$}};  
\node[Circ_gray, anchor=center] (b2) at (block_1.north) [yshift=-1.25cm] {{\small $y_2$}};  
\node[anchor=center] (ellipsis) at (block_1.north) [yshift=-1.8cm] {{\footnotesize \vdots}};  
\node[Circ_gray, anchor=center] (bp) at (block_1.north) [yshift=-2.55cm] {{\small $y_p$}};  

\node[anchor=center] (text) at (block_2.north) [yshift=+0.45cm] {{\footnotesize \textcolor{blue}{$\phi(\mathbf{u}_{\text{NN}})$}}};
\node[Circ_purp, anchor=center] (h44) at (block_2.north) [yshift=-0.45cm] {{\small $\phi^1$}};  
\node[Circ_purp, anchor=center] (h43) at (block_2.north) [yshift=-1.25cm] {{\small $\phi^2$}};  
\node[anchor=center] (ellipsis_2) at (block_2.north) [yshift=-1.8cm] {{\footnotesize \vdots}};  
\node[Circ_purp, anchor=center] (h42) at (block_2.north) [yshift=-2.55cm] {{\small $\phi^{n-1}$}};  
\node[Circ_purp, anchor=center] (h41) at (block_2.north) [yshift=-3.35cm] {{\small $\phi^n$}};

\node[anchor=center] (h34) at (block_2.north) [xshift=-1.0cm, yshift=-0.45cm] {{\footnotesize \dots}};  
\node[anchor=center] (h33) at (block_2.north) [xshift=-1.0cm, yshift=-1.25cm] {{\footnotesize \dots}};  
\node[anchor=center] (ellipsis_2) at (block_2.north) [xshift=-1.0cm, yshift=-1.8cm] {{\footnotesize \dots}};  
\node[anchor=center] (h32) at (block_2.north) [xshift=-1.0cm, yshift=-2.55cm] {{\footnotesize \dots}};  
\node[anchor=center] (h31) at (block_2.north) [xshift=-1.0cm, yshift=-3.35cm] {{\footnotesize \dots}};

\node[Circ_purp, anchor=center] (h24) at (block_2.north) [xshift=-2.0cm, yshift=-0.45cm] { };  
\node[Circ_purp, anchor=center] (h23) at (block_2.north) [xshift=-2.0cm, yshift=-1.25cm] { };  
\node[anchor=center] (ellipsis_2) at (block_2.north) [xshift=-2.0cm, yshift=-1.8cm] {{\footnotesize \vdots}};  
\node[Circ_purp, anchor=center] (h22) at (block_2.north) [xshift=-2.0cm, yshift=-2.55cm] { };  
\node[Circ_purp, anchor=center] (h21) at (block_2.north) [xshift=-2.0cm, yshift=-3.35cm] { };

\node[Circ_purp, anchor=center] (h13) at (block_1.north) [xshift=-5.0cm, yshift=-0.45cm] { };  
\node[Circ_purp, anchor=center] (h12) at (block_1.north) [xshift=-5.0cm, yshift=-1.25cm] { };  
\node[anchor=center] (ellipsis_2) at (block_1.north) [xshift=-5.0cm, yshift=-1.8cm] {{\footnotesize \vdots}};  
\node[Circ_purp, anchor=center] (h11) at (block_1.north) [xshift=-5.0cm, yshift=-2.55cm] { };  

\node[anchor=center] (in) at (block_1.north) [xshift=-6.0cm, yshift=-1.4cm] {$\mathbf{u}_{\text{NN}}$};  

\node[anchor=center] (out) at (block_1.north) [xshift=0cm, yshift=0.5cm] {$\mathbf{y}_{\text{NN}}$};  

\draw[-, thick] (h11.east) -- (h21.west); 
\draw[-, thick] (h11.east) -- (h22.west);
\draw[-, thick] (h11.east) -- (h23.west);
\draw[-, thick] (h11.east) -- (h24.west);

\draw[-, thick] (h12.east) -- (h21.west); 
\draw[-, thick] (h12.east) -- (h22.west);
\draw[-, thick] (h12.east) -- (h23.west);
\draw[-, thick] (h12.east) -- (h24.west);

\draw[-, thick] (h13.east) -- (h21.west); 
\draw[-, thick] (h13.east) -- (h22.west);
\draw[-, thick] (h13.east) -- (h23.west);
\draw[-, thick] (h13.east) -- (h24.west);

\draw[-, thick] (h41.east) -- (b1.west); 
\draw[-, thick] (h41.east) -- (b2.west); 
\draw[-, thick] (h41.east) -- (bp.west); 

\draw[-, thick] (h42.east) -- (b1.west);
\draw[-, thick] (h42.east) -- (b2.west);
\draw[-, thick] (h42.east) -- (bp.west);

\draw[-, thick] (h43.east) -- (b1.west);
\draw[-, thick] (h43.east) -- (b2.west);
\draw[-, thick] (h43.east) -- (bp.west);

\draw[-, thick] (h44.east) -- (b1.west);
\draw[-, thick] (h44.east) -- (b2.west);
\draw[-, thick] (h44.east) -- (bp.west);

\end{tikzpicture}
        \caption{Illustration of a fully connected feedforward neural network (FFN) and it's underlying basis.}
        \label{fig:FFN}
\end{figure}
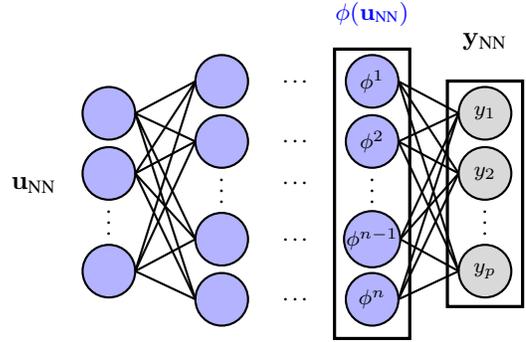
A powerful method for learning functions from data is to use fully connected feedforward neural networks, see \figurename~\ref{fig:FFN}. The universal approximation theorem claims that standard multilayer feedforward networks with a single hidden layer that contains a finite number of hidden neurons, and with an arbitrary activation function are universal approximators of continuous functions \cite{hornik1991approximation, csaji2001approximation}. Letting $\mathbf{u}_{NN}$ and $\mathbf{y}_{NN}$ represent the inputs and outputs of a generic deep NN, the output of the NN can be expressed as \cite{Neural_Basis_Lazar}:
$$
\mathbf{y}_{NN} = W_o \phi(\mathbf{u}_{NN}) + b_o,
$$
where $\phi=\col(\phi^1,\ldots,\phi^n)$, collects the outputs of the last hidden layer and $W_o\in\mathbb{R}^{p\times n}$ and $b_o\in\mathbb{R}^p$. Since the outputs of the network are in the affine span of $\phi$, this can be viewed as an implicit underlying basis. Based on this insight, a single deep NN can be used to learn functions with multiple inputs and outputs. Such networks were used to design a multistep and multi-output predictor for discrete--time dynamical systems in \cite{Neural_Basis_Lazar}. The prediction models were implemented in a predictive control scheme to actively control the dynamical system. However, such solutions are not directly applicable to learn models for a continuous--time dynamical system \eqref{eqn:systems} since this is an operator rather than a function.

DeepONet was proposed in \cite{DeepONet} for learning operators $G(u)(z)$ from data. In general, the network takes inputs composed of two parts: $u$ and $z$, and outputs $G(u)(z)\in\mathbb{R}$. The DeepONet consist of two FFNs, namely the branch and trunk network, see \figurename~\ref{fig:DeePONet} and \figurename~\ref{fig:DeePONet_unstacked} for the stacked and unstacked formulations respectively. This was later extended to multi input multi output (MIMO) operators in \cite{MimoONet} by adding branch networks for each input signal, see \figurename~\ref{fig:MimoONet}. In this work, we propose using the DeepONet architecture to design predictive controllers for continuous--time MIMO systems of the form \eqref{eqn:systems}. At each discrete--time step \( t = kT_s \), it is necessary to generate multiple predictions of the system's output over time, specifically let \( \{ y((k+1)T_s), y((k+2)T_s), \dots, y((k+N)T_s) \} \), where \( T_s \in \mathbb{R} \) denotes the sampling time, and \( N \in \mathbb{N} \) represents the prediction horizon. In a predictive control setting we are typically interested in finite length signals where only sampled measurements of the output $y(t)$ of system \eqref{eqn:systems} are available and the input signal $u(t)$ is implemented  by using Zero-Order-Hold (ZOH), i.e.,
\begin{equation}\label{eqn:piecewise_constant_input}
   u(t) = u_k \quad \text{for} \quad t \in \left[k T_s, (k+1) T_s\right), 
\end{equation}
where $u_k\in\mathbb{R}^{n_u}$ is the constant input of the signal during the $k$-th interval, and $k \in \mathbb{N}$. We denote by $\bu_k = \col(u_k, \dots, u_{k+N-1})$ and the output sequence $\by_k = \col(y((k+1)T_s), \dots, y((k+N)T_s))$. 
\begin{figure*}[t!]
    \centering
    \begin{subfigure}[t]{0.48\textwidth}
        \centering
    \begin{tikzpicture}[
Block_purp/.style={draw=black, fill = DeePONet_purple, very thick, minimum width=2cm, minimum height=0.5cm},
Blockwhite/.style={draw=black, fill = white , very thick, minimum width=0.5cm, minimum height=0.5cm},
Circ_gray_prod/.style={draw=black, fill=DeePONet_gray, shape=circle, thick, minimum width=0.7cm, minimum height=0.7cm, inner sep=0pt, outer sep=0pt},
node distance=10mm  
]
\tikzset{Circ_gray/.style={draw=black, fill=DeePONet_gray, shape=circle, thick, minimum width=0.7cm, minimum height=0.7cm, inner sep=0pt, outer sep=0pt}}

\node[draw, fill=white, very thick, minimum width=1cm, minimum height=3cm] (block_1) at (0, 0) {};

\node[Circ_gray, anchor=center] (b1) at (block_1.north) [yshift=-0.45cm] {{\footnotesize $b_1$}};  
\node[Circ_gray, anchor=center] (b2) at (block_1.north) [yshift=-1.25cm] {{\footnotesize $b_2$}};  
\node[anchor=center] (ellipsis) at (block_1.north) [yshift=-1.8cm] {{\footnotesize \vdots}};  
\node[Circ_gray, anchor=center] (bp) at (block_1.north) [yshift=-2.55cm] {{\footnotesize $b_p$}};  
    
\node[draw, fill=white, very thick, minimum width=1cm, minimum height=3cm] (block_2) at (0, -3.3cm) {};

\node[Circ_gray, anchor=center] (t1) at (block_2.north) [yshift=-0.45cm] {{\footnotesize $t_1$}};  
\node[Circ_gray, anchor=center] (t2) at (block_2.north) [yshift=-1.25cm] {{\footnotesize $t_2$}};  
\node[anchor=center] (ellipsis) at (block_2.north) [yshift=-1.8cm] {{\footnotesize \vdots}};  
\node[Circ_gray, anchor=center] (tp) at (block_2.north) [yshift=-2.55cm] {{\footnotesize $t_p$}};  

\node[Block_purp] (branch1) [left=0.5cm of b1] {\parbox{2.0cm}{\centering Branch net$_1$}};
\node[Block_purp] (branch2) [left=0.5cm of b2] {\parbox{2.0cm}{\centering Branch net$_2$}};
\node[Block_purp] (branch3) [left=0.5cm of bp] {\parbox{2.0cm}{\centering Branch net$_p$}};

\node[Block_purp] (trunk1) [left=0.5cm of block_2] {\parbox{2.0cm}{\centering Trunk net}};

\node[Circ_gray_prod] (circ1) [right=of $(block_1)!0.5!(block_2)$, yshift=0mm, xshift=0.5mm, inner sep=0pt, outer sep=0pt] {$\times$};


\node[Blockwhite] (block4) [left=3.0cm of block_1] {\parbox{0.8cm}{\small \centering ${\begin{matrix}
    u(s_1) \\
    u(s_2) \\
    \vdots \\
    u(s_m)
\end{matrix}}$}};

\node (input_trunk) at (trunk1) [shift={(-2.0cm,0.0cm)}] {\small ${z}$};

 \draw[->, thick] (block4.east) ++(0, 5mm) -- (branch1.west) node[midway, above] {};
  \draw[->, thick] (block4.east) -- (branch2.west) node[midway, above] {};
 \draw[->, thick] (block4.east) ++(0, -5mm) -- (branch3.west) node[midway, above] {};

 \draw[->, thick] (branch1.east) -- (b1.west) node[midway, above] {};
 \draw[->, thick] (branch2.east) -- (b2.west) node[midway, above] {};
 \draw[->, thick] (branch3.east) -- (bp.west) node[midway, above] {};

  \draw[->, thick] (trunk1.east) -- (t1.west) node[midway, above] {};
 \draw[->, thick] (trunk1.east) -- (t2.west) node[midway, above] {};
 \draw[->, thick] (trunk1.east) -- (tp.west) node[midway, above] {};
 
\draw[->, thick] (block_1.east) -- (circ1.north west) node[midway, above] {};
\draw[->, thick] (block_2.east) -- (circ1.south west) node[midway, above] {};

\draw[->, thick] (input_trunk.east) -- (trunk1.west) node[midway, above] {};

\draw[->, thick] (circ1.east) --  ++ (1cm,0cm) node[midway, above, yshift=0.3cm, xshift = 0.2cm] {{\small$G(u)(z)$}};

\end{tikzpicture}
        \caption{Traditional stacked DeepONet architecture.}
        \label{fig:DeePONet}
    \end{subfigure}
    \hfill
    \begin{subfigure}[t]{0.48\textwidth}
        \centering
    \begin{tikzpicture}[
Block_purp/.style={draw=black, fill = DeePONet_purple, very thick, minimum width=2cm, minimum height=0.5cm},
Blockwhite/.style={draw=black, fill = white , very thick, minimum width=0.5cm, minimum height=0.5cm},
Circ_gray_prod/.style={draw=black, fill=DeePONet_gray, shape=circle, thick, minimum width=0.7cm, minimum height=0.7cm, inner sep=0pt, outer sep=0pt},
node distance=10mm  
]
\tikzset{Circ_gray/.style={draw=black, fill=DeePONet_gray, shape=circle, thick, minimum width=0.7cm, minimum height=0.7cm, inner sep=0pt, outer sep=0pt}}

\node[draw, fill=white, very thick, minimum width=1cm, minimum height=3cm] (block_1) at (0, 0) {};

\node[Circ_gray, anchor=center] (b1) at (block_1.north) [yshift=-0.45cm] {{\footnotesize $b_1$}};  
\node[Circ_gray, anchor=center] (b2) at (block_1.north) [yshift=-1.25cm] {{\footnotesize $b_2$}};  
\node[anchor=center] (ellipsis) at (block_1.north) [yshift=-1.8cm] {{\footnotesize \vdots}};  
\node[Circ_gray, anchor=center] (bp) at (block_1.north) [yshift=-2.55cm] {{\footnotesize $b_p$}};  
    
\node[draw, fill=white, very thick, minimum width=1cm, minimum height=3cm] (block_2) at (0, -3.3cm) {};

\node[Circ_gray, anchor=center] (t1) at (block_2.north) [yshift=-0.45cm] {{\footnotesize $t_1$}};  
\node[Circ_gray, anchor=center] (t2) at (block_2.north) [yshift=-1.25cm] {{\footnotesize $t_2$}};  
\node[anchor=center] (ellipsis) at (block_2.north) [yshift=-1.8cm] {{\footnotesize \vdots}};  
\node[Circ_gray, anchor=center] (tp) at (block_2.north) [yshift=-2.55cm] {{\footnotesize $t_p$}};  

\node[Block_purp] (branch1) [left=1.0cm of block_1] {\parbox{2.0cm}{\centering Branch net}};
\node[Block_purp] (trunk1) [left=1.0cm of block_2] {\parbox{2.0cm}{\centering Trunk net}};

\node[Circ_gray_prod] (circ1) [right=of $(block_1)!0.5!(block_2)$, yshift=0mm, xshift=0.5mm, inner sep=0pt, outer sep=0pt] {$\times$};


\node[Blockwhite] (block4) [left=0.5cm of branch1] {\parbox{0.8cm}{\small \centering ${ \begin{matrix}
    u(s_1) \\
    u(s_2) \\
    \vdots \\
    u(s_m)
\end{matrix}}$}};

\node (input_trunk) at (trunk1) [shift={(-2.0cm,0.0cm)}] {$z$};

 \draw[->, thick] (block4.east) -- (branch1.west) node[midway, above] {};
 \draw[->, thick] (branch1.east) -- (b1.west) node[midway, above, xshift=-2.5mm , yshift=0.4mm] {\textcolor{blue}{$\phi_b(u)$}};
 \draw[->, thick] (branch1.east) -- (b2.west) node[midway, above] {};
 \draw[->, thick] (branch1.east) -- (bp.west) node[midway, above] {};

  \draw[->, thick] (trunk1.east) -- (t1.west) node[midway, above, xshift=-2.5mm , yshift=0.4mm] {\textcolor{blue}{$\phi_t(z)$}};
 \draw[->, thick] (trunk1.east) -- (t2.west) node[midway, above] {};
 \draw[->, thick] (trunk1.east) -- (tp.west) node[midway, above] {};
 
\draw[->, thick] (block_1.east) -- (circ1.north west) node[midway, above, xshift= 4mm, yshift= 2mm] {\textcolor{red}{$b(u)$}};
\draw[->, thick] (block_2.east) -- (circ1.south west) node[midway, above, xshift= 4mm, yshift= -5mm] {\textcolor{red}{$t(z)$}};

\draw[->, thick] (input_trunk.east) -- (trunk1.west) node[midway, above] {};

\draw[->, thick] (circ1.east) --  ++ (1cm,0cm) node[midway, above, yshift=0.3cm, xshift = 0.2cm] {{\small$G(u)(z)$}};
\end{tikzpicture}
    \vspace*{-1.25em} 
        \caption{Traditional unstacked DeepONet architecture.}
        \label{fig:DeePONet_unstacked}
    \end{subfigure}
    \caption{Standard DeepONet formulations: (a) stacked and (b) unstacked architectures.}
\end{figure*}
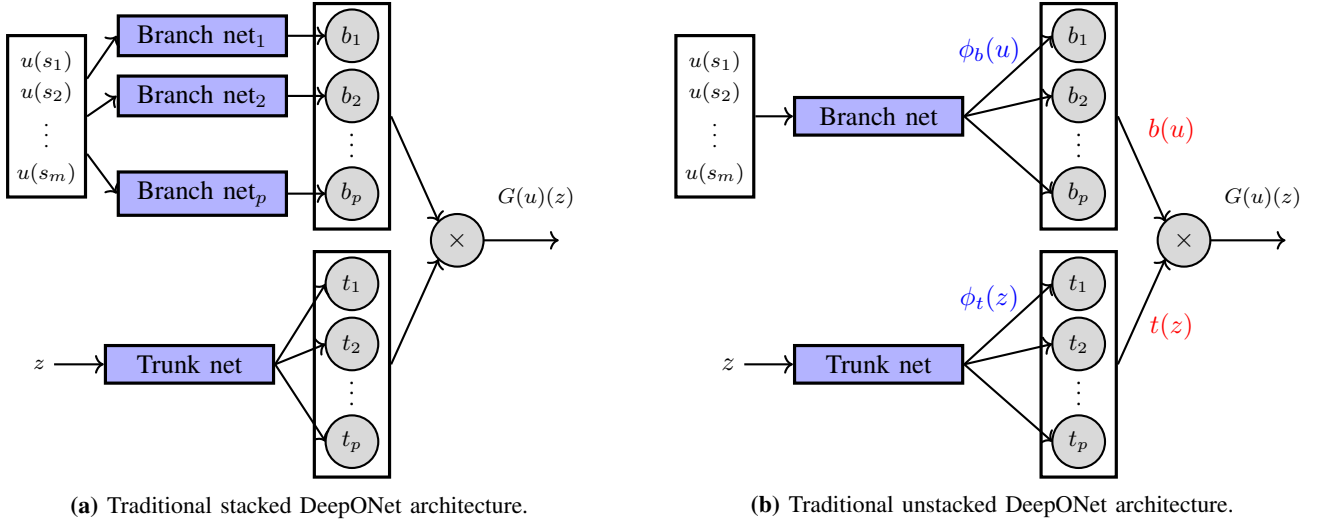
We now introduce the core equations underlying DeepONet, focusing on the MimoONet architecture illustrated in \figurename~\ref{fig:MimoONet}. This choice is motivated by the fact that MimoONet generalizes the unstacked DeepONet shown in \figurename~\ref{fig:DeePONet_unstacked}, and, as demonstrated in the Appendix, Proposition~\ref{proposition:stacked_unstacked}, the stacked DeepONet can be viewed as a special case of the unstacked version. For this reason, we present results only for the unstacked DeepONet. To simplify notation, we refer to the output equation of MimoONet as the standard DeepONet, though our results apply to all three architectures. To obtain such multi-step predictions in time using the standard DeepONet, we pass both the time and the current state $z_{j,k} := \col(x_k,jT_s)$ with $j=1,\dots,N$ as input to the trunk network. This allows us to predict $N$ future steps from the current state. Note that individual branch networks are added for each input signal, to this end define $\bu_k^i := \col(u_i(kT_s), \dots, u_i((k+N-1)T_s))$ for $i=1,\dots,n_u$. The corresponding branch network output layer takes as input the vector $\phi_b^i(\bu^i_k) \in\mathbb{R}^{n_{b,i}}$ where $n_{b,i}\in\mathbb{N}$ is the number of neurons in the last hidden layer of branch net$_i$ and has as output a vector $b^i(\bu^i_k)\in\mathbb{R}^{p n_y}$. Let $b^{i}_{q,j}(\bu^i_k)$ with $q = 1,\dots,p$, $j=1,\dots,n_y$ denote the $(q+(j-1)p)$-th element of the vector $b^i( \bu^i_k)$, then 
$$
b^i_{q,j}( \bu^i_k) = \sum_{l=1}^{n_{b,i}} w^{i,l}_{q,j} \phi_b^l(\bu^i_k) + \xi^{i}_{q,j}.
$$
The output layer of the trunk net takes in the vector $\phi_t(z_{j,k}) \in\mathbb{R}^{n_t}$ and has as output a vector of functions $t(z_{j,k})\in\mathbb{R}^{p\times 1}$.  Let $t_{i}(x_0,jT_s)$ denote the $i$-th element of the vector $t(x_0,jT_s)$, then 
$$
t_{i}(x_0,jT_s) = \sum_{l=1}^{n_t} \alpha^i_{l} \phi_t^l(x_0,jT_s) + \zeta^{i}.
$$
The product layer is then used to compute the output as:
\begin{align}\label{eqn:MimoOnet_pm}
     y_i(k+jT_s) = \sum_{l=1}^{p} \mathbf{b}^{n_u}_{l,i}(\mathbf{u}_k)
   t_l(x_0,jT_s),
\end{align}
where for simplicity of notation we define 
\begin{align}\label{eqn:mimoNet_branch}
    \mathbf{b}^{n_u}_{i,j}(\mathbf{u}_k) := b^{1}_{i,j}(\bu^1_k) b^{2}_{i,j}(\bu^2_k) \dots b^{n_u}_{i,j}(\bu^{n_u}_k).
\end{align}
The approach described above employs $n_u$ branch networks and a $1$ trunk networks in total. However, predicting multiple steps in time, that is, $t=T_s, 2T_s,\dots,NT_s$, requires evaluating the model $N$ times. Next we state a prototype nonlinear MPC problem where the standard DeepONet is employed as a prediction model. 

\begin{problem} (Prototype DeepONet MPC Problem at $t=kT_s$) \label{prob:DMPC_unstacked}
\begin{subequations}
\label{eq:3:1_1}
\begin{align}
&\min_{\bar \bu_k}  \quad   l_N(y_{N|k}) +  \sum_{i=1}^{N-1} l(y_{i|k},u_{i|k}) \label{eq:DMPCp_a1} \\ 
&\text{subject to: } \nonumber  \\
& \quad x_{0|k} =  x(kT_s), \\
& \quad y_{j|k} = \sum_{l=1}^{p} \mathbf{b}^{n_u}_{l,i}(\bar\bu_k)
   t_l(x_{0|k},jT_s), \quad j = 1,\dots, N, \label{eq:DMPCp_b1} \\
&\quad (\bar \by_k ,\bar \bu_k)  \in \Yset^{Nn_y} \times \Uset^{Nn_u}. \label{eq:DMPCp_c2}
\end{align}
\end{subequations}
\end{problem}
In the above problem, for a continuous--time signal $a(t)$, with the notation $a_{i|k}$ we denote the predicted value of $a(T_s(i+k))$ at $t=kT_s$ and the predicted state and input sequences are defined by $\bar\bu_{k} := \col(u_{0|k},\dots,u_{N-1|k})$ and $\bar \by_{k} : = \col(y_{1|k},\dots,y_{N|k})$.  In addition, $l_N(y_{N|k})$ and $l(y_{i|k},u_{i|k})$ denote the terminal cost and stage cost, respectively. Furthermore, in this approach, computing the prediction of multiple real-valued outputs requires training and using online multiple branch networks, one for each real-valued input of the underlying dynamical system. With this in mind, we present the problem statement of this paper. 
\begin{figure*}[t!]
        \centering
        \begin{tikzpicture}[
Block_purp/.style={draw=black, fill = DeePONet_purple, very thick, minimum width=1cm, minimum height=0.5cm},
Blockwhite/.style={draw=black, fill = white , very thick, minimum width=0.4cm, minimum height=0.5cm},
Circ_gray_prod/.style={draw=black, fill=DeePONet_gray, shape=circle, thick, minimum width=0.5cm, minimum height=0.5cm, inner sep=0pt, outer sep=0pt},
node distance=10mm  
]
\tikzset{
  Circ_gray/.style={
    draw=black,
    fill=DeePONet_gray,
    shape=circle,
    thick,
    minimum size=0.55cm,
    inner sep=0pt,
    outer sep=0pt,
    text width=0.55cm,
    align=center
  }
}

\tikzset{Circ_red/.style={draw=black, fill=red, shape=circle, thick, minimum width=0.5cm, minimum height=0.5cm, inner sep=0pt, outer sep=0pt}}

\tikzset{Circ_blue/.style={draw=black, fill=blue, shape=circle, thick, minimum width=0.5cm, minimum height=0.5cm, inner sep=0pt, outer sep=0pt}}

\node[Circ_red] (prod_1) at (0, 0) {$\times$};

\node[Circ_blue] (prod_2) at (1.1cm, 1.7cm) {$\times$};

\node[draw, fill=white, very thick, minimum width=0.8cm, minimum height=1.65cm, draw=red] (block_1_1) at (prod_1.west) [xshift= -1.0cm] {};

\node[Circ_gray, anchor=center] (b1_1) at (block_1_1.north) [yshift=-0.375cm] {{\footnotesize $b^1_{1,1}$}};  
\node[anchor=center] (ellipsis) at (block_1_1.north) [yshift=-0.775cm] {{\scalebox{0.6}{$\vdots$}}};  
\node[Circ_gray, anchor=center] (b1_2) at (block_1_1.north) [yshift=-1.325cm] {{\footnotesize $b^{1}_{p,1}$}};  

\node[draw, fill=white, very thick, minimum width=0.8cm, minimum height=1.65cm, draw=blue] (block_1_2) at (prod_2.west) [xshift= -2.1cm] {};

\node[Circ_gray, anchor=center] (b2_1) at (block_1_2.north) [yshift=-0.375cm] {{\footnotesize $b^1_{1,2}$}};  
\node[anchor=center] (ellipsis) at (block_1_2.north) [yshift=-0.775cm] {{\scalebox{0.6}{$\vdots$}}};  
\node[Circ_gray, anchor=center] (b2_2) at (block_1_2.north) [yshift=-1.325cm] {{\footnotesize $b^1_{p,2}$}};  

\node[draw, fill=white, very thick, minimum width=0.8cm, minimum height=1.65cm, draw=red] (block_2_1) at (prod_1.east) [xshift= 2.2cm] {};

\node[Circ_gray, anchor=center] (b2_1) at (block_2_1.north) [yshift=-0.375cm] {{\footnotesize $b^2_{1,1}$}};  
\node[anchor=center] (ellipsis) at (block_2_1.north) [yshift=-0.775cm] {{\scalebox{0.6}{$\vdots$}}};  
\node[Circ_gray, anchor=center] (b2_2) at (block_2_1.north) [yshift=-1.325cm] {{\footnotesize $b^2_{p,1}$}};  

\node[draw, fill=white, very thick, minimum width=0.8cm, minimum height=1.65cm, draw=blue] (block_2_2) at (prod_2.east) [xshift= 1.1cm] {};

\node[Circ_gray, anchor=center] (b2_1) at (block_2_2.north) [yshift=-0.375cm] {{\footnotesize $b^2_{1,2}$}};  
\node[anchor=center] (ellipsis) at (block_2_2.north) [yshift=-0.775cm] {{\scalebox{0.6}{$\vdots$}}};  
\node[Circ_gray, anchor=center] (b2_2) at (block_2_2.north) [yshift=-1.325cm] {{\footnotesize $b^2_{p,2}$}};  

\node[draw, fill=none, very thick, minimum width=0.9cm, minimum height= 3.45cm, draw=black] (block_u2_black) at (block_2_2.south) [xshift= 0cm, yshift= -0.005cm] {};

\node[draw, fill=white, very thick, minimum width=2.35cm, minimum height= 0.65cm, draw=black] (block_3_1) at (prod_1.north) [xshift= 0.5cm, yshift= 3.0cm] {};

\node[Circ_gray, anchor=center] (t1_1) at (block_3_1.west) [xshift=0.35cm] {{\footnotesize $t_1$}};  
\node[Circ_gray, anchor=center] (t1_2) at (block_3_1.west) [xshift=1.05cm] {{\footnotesize $t_2$}};  
\node[anchor=center] (ellipsis) at (block_3_1.west) [xshift=1.5cm] {{\tiny \dots}};  
\node[Circ_gray, anchor=center] (t1_p) at (block_3_1.west) [xshift=1.95cm] {{\footnotesize $t_p$}};  

\node[draw, fill=none, very thick, minimum width=0.9cm, minimum height= 3.45cm, draw=black] (block_u1_black) at (block_1_2.south) [xshift= 0cm, yshift= -0.005cm] {};

\node[Block_purp] (branch1) [left=0.75cm of block_u1_black] {\parbox{1.8cm}{\centering {\small Branch net$_1$}}};

\node[Block_purp] (branch2) [right=0.75cm of block_u2_black] {\parbox{1.8cm}{\centering {\small Branch net$_2$}}};



\node[Block_purp] (trunk1) [left=1.75cm of block_3_1] {\parbox{1.8cm}{\centering {\small Trunk net}}};

\node (input_trunk) [left=0.75cm of trunk1] { \footnotesize \centering ${\hspace{-0.4mm} \begin{matrix}
    x(0) \\
    jT_s
\end{matrix}}$};

\node[Blockwhite] (input_1) [left=0.75cm of branch1] {\parbox{1.9cm}{ \footnotesize \centering ${\hspace{-0.4mm} \begin{matrix}
    u_1(0) \\
    u_1(T_s) \\
    \vdots \\
    u_1((N-1)T_s)
\end{matrix}}$}};

\node[Blockwhite] (input_2) [right=0.75cm of branch2] {\parbox{1.9cm}{ \footnotesize \centering ${\hspace{-0.4mm} \begin{matrix}
    u_2(0) \\
    u_2(T_s) \\
    \vdots \\
    u_2((N-1)T_s)
\end{matrix}}$}};

\node (output_1) at (prod_1) [shift={(0.0cm,-1.0cm)}] {\small \textcolor{red}{$ y_1(jT_s)$}};
\node (output_2) at (prod_2) [shift={(0.0cm,-2.7cm)}] {\small \textcolor{blue}{$ y_2(jT_s)$}};

 \draw[->, thick] (input_1.east) -- (branch1.west) node[midway, above] {};
 \draw[->, thick] (input_2.west) -- (branch2.east) node[midway, above] {};
 \draw[->, thick] (input_trunk.east) -- (trunk1.west) node[midway, above] {};

 \draw[->, thick] (trunk1.east) -- (block_3_1.west) node[midway, above] {};
 \draw[->, thick] (branch1.east) -- (block_u1_black.west) node[midway, above] {};
 \draw[->, thick] (branch2.west) -- (block_u2_black.east) node[midway, above] {};

 \draw[->, thick, red] (block_1_1.east) -- (prod_1.west) node[midway, above] {};
 \draw[->, thick, blue] (block_1_2.east) -- (prod_2.west) node[midway, above] {};

 \draw[->, thick, red] (block_2_1.west) -- (prod_1.east) node[midway, above] {};
 \draw[->, thick, blue] (block_2_2.west) -- (prod_2.east) node[midway, above] {};

\draw[->, thick, red] (prod_1.north  |- block_3_1.south) -- (prod_1.north);
\draw[->, thick, blue] (prod_2.north  |- block_3_1.south) -- (prod_2.north);

 \draw[->, thick, red] (prod_1.south) -- (output_1.north) node[midway, above] {};
 \draw[->, thick, blue] (prod_2.south) -- (output_2.north) node[midway, above] {};
\end{tikzpicture}
        \caption{Illustration of the stacked MimoONet \cite{MimoONet} for 2 inputs and 2 outputs.}
        \label{fig:MimoONet}
\end{figure*}
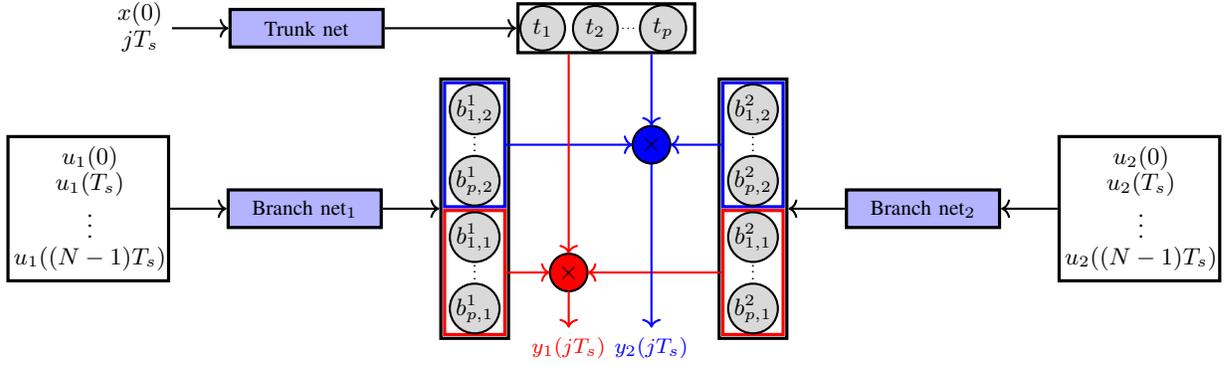  

\paragraph*{Problem statement}
Given the nonlinear MIMO system in~\eqref{eqn:systems}, the goal is to approximate the multi-step operator mapping the initial state \( x_k \) and input \( \bu_k \) to future outputs \( \by_k \) using the DeepONet framework. The model should predict multiple time steps across multiple outputs in a single network evaluation, enabling efficient predictive control. Crucially, the approach must also satisfy the universal approximation theorem to ensure representational completeness.

\section{THE MULTI-STEP DEEPONET ARCHITECTURE}
\label{sec3}
Since we are only interested in the solution $y(t)$ of system \eqref{eqn:systems} at discrete time instances  $t=(k+1)T_s, (k+2)T_s,\dots,(k+N)T_s$, given a piecewise-constant input signal $\{u(t)\}_{t\in[kT_s,(k+N-1)T_s)}$ and initial condition $x_k:= x(kT_s)$ at time $t=kT_s$, the multi-step output solution $\mathbf{y}_k$ for the continuous-time dynamical system \eqref{eqn:systems} is given by
\begin{align}
\label{eqn:operator}
  \by_k &=\mathcal{G}(\bu_k)(x_k) : =\nonumber\\  &\begin{bmatrix}
     h\left(x_k + \sum_{i=0}^0 \int_{(k+i) T_s}^{(k+i+1) T_s} f(x(\tau), [\mathbf{u}_k]^{(i+1) n_u}_{i n_u+1} ) \, d\tau\right) \\
     h\left(x_k + \sum_{i=0}^1\int_{(k+i) T_s}^{(k+i+1) T_s} f(x(\tau), [\mathbf{u}_k]^{(i+1) n_u}_{i n_u+1}) \, d\tau\right) \\
    \vdots \\
    h\left(x_k + \sum_{i=0}^{N-1}\int_{(k+i) T_s}^{(k+i+1) T_s} f(x(\tau), [\mathbf{u}_k]^{(i+1) n_u}_{i n_u+1} ) \, d\tau\right)
\end{bmatrix},
\end{align}
where $\mathbf{y}_k = \col(\mathcal{G}^1(\mathbf{u}_k)(x_k),\dots,\mathcal{G}^{Nn_y}(\mathbf{u}_k)(x_k))\in\mathbb{R}^{Nn_y}$ is a vector of $Nn_y$ operators mapping real vectors in $\mathbb{U}^{N}\subset \mathbb{R}^{Nn_u}$ to continuous functions in $C(\mathbb{X})$. Notice that time is implicitly encoded and there is no longer a need to use the time index $jT_s$ as an input to the trunk network. Next, we present the universal approximation theorem from \cite{Chen_ONN}. 

\begin{theorem}[Theorem 5, \citen{Chen_ONN}]\label{theorem:universal_approx}
 Suppose that $\sigma\in(TW)$, $X$ is a Banach Space, $K_1 \subset X$, $K_2 \subset \mathbb{R}^d$ are two compact sets in $X$ and $\mathbb{R}^d$, and $V$ is a compact set in $C(K_1)$, $G$ is a nonlinear continuous operator, which maps $V$ into $C(K_2)$. Then for any $\epsilon > 0$, there are positive integers $n$, $p$, $m$, constants $c_i^k$, $\xi_{ij}^k$, $\theta_i^k$, $\zeta_k\in\mathbb{R}$. $w_k\in\mathbb{R}^d$, $s_j\in K_1$, $i = 1,\dots,n$, $k=1,\dots,p$, $j=1,\dots,m$, such that 
\begin{equation}\label{eqn:universal_approx_trm}
    \begin{split}
    \bigg| &G(u)(z) - \\&\sum_{k=1}^p\sum_{i=1}^n c_i^k \sigma \left( \sum_{j=1}^m \xi_{ij}^k u(s_j) + \theta_i^k\right)\sigma\left( w_k \cdot z + \zeta_k \right) \bigg| < \epsilon
\end{split}
\end{equation}
holds for all $u\in V$ and $z\in K_2$. 
\end{theorem}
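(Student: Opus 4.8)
The statement is the operator universal-approximation theorem of Chen and Chen, so the plan is to reduce it to two finite-dimensional approximation problems---one in the input function $u$ (the branch) and one in the evaluation point $z$ (the trunk)---each of which is solved by the single-hidden-layer density property that is built into the definition of a $(TW)$ activation $\sigma$. Two structural facts drive everything: (i) a compact subset of a Banach space lies within any prescribed tolerance of a finite-dimensional subspace (total boundedness), and (ii) a compact subset of $C(K)$ is equicontinuous and uniformly bounded (Arzel\`a--Ascoli). After fixing $\epsilon > 0$ I would split the target error into three budgets of size $\epsilon/3$.

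\textbf{Trunk step.} Since $G$ is continuous and $V$ is compact, $G(V)$ is a compact subset of $C(K_2)$. By (i), there exist finitely many functions $g_1,\dots,g_L \in C(K_2)$ and coefficient functionals $a_1,\dots,a_L : V \to \mathbb{R}$ with $\| G(u) - \sum_{l=1}^L a_l(u)\, g_l \|_{C(K_2)} < \epsilon/3$ for all $u \in V$; the $a_l$ can be taken continuous in $u$ by composing $G$ with a bounded linear projection onto $\mathrm{span}\{g_1,\dots,g_L\}$ (refining the span if needed so the projected residual stays within budget). Each $g_l$ is a continuous function on the compact set $K_2 \subset \mathbb{R}^d$, so the multivariate density property of $\sigma \in (TW)$ gives single-hidden-layer approximations $g_l(z) \approx \sum_k \beta_{lk}\,\sigma(w_k\cdot z + \zeta_k)$. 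Reindexing the trunk neurons as $k = 1,\dots,p$ and collapsing the coefficients yields $G(u)(z) \approx \sum_{k=1}^p \tilde b_k(u)\,\sigma(w_k\cdot z + \zeta_k)$, where $\tilde b_k(u) := \sum_l a_l(u)\beta_{lk}$ is again a continuous functional of $u$.

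\textbf{Branch step.} It remains to approximate each continuous functional $\tilde b_k$ on $V \subset C(K_1)$ by a network acting on finitely many point samples of $u$. Using (ii) I would choose sampling sites $s_1,\dots,s_m \in K_1$ forming a net fine enough that every $u \in V$ is reconstructed from $(u(s_1),\dots,u(s_m))$ up to arbitrarily small sup-norm error, e.g.\ via a partition of unity $R u = \sum_j u(s_j) p_j$ with $\|u - Ru\|_{C(K_1)}$ small. Uniform continuity of $\tilde b_k$ on $V$ then shows that $\tilde b_k$ factors, to within $\epsilon/3$, through the sampling map into a genuinely continuous function on the compact set $\{(u(s_1),\dots,u(s_m)) : u \in V\} \subset \mathbb{R}^m$; a final application of the $(TW)$ density property in $\mathbb{R}^m$ gives $\tilde b_k(u) \approx \sum_{i=1}^n c_i^k \sigma(\sum_{j=1}^m \xi_{ij}^k u(s_j) + \theta_i^k)$. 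Substituting this into the trunk expression produces exactly the claimed branch--trunk form.

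\textbf{Assembling the errors and the main obstacle.} Combining the three approximations gives the bound $<\epsilon$, but because the summand $\tilde b_k(u)\,\sigma(w_k\cdot z + \zeta_k)$ is a \emph{product}, the errors do not simply add: I would close the estimate using uniform bounds on $|\tilde b_k(u)|$ over $u \in V$ and on $|\sigma(w_k\cdot z + \zeta_k)|$ over the compact set $K_2$, so that replacing each factor by its approximant perturbs the sum by a controlled amount. I expect the genuine difficulty to lie in the well-definedness argument of the branch step: the sampling map $u \mapsto (u(s_j))_j$ need not be injective on $V$, so one must first show that two functions agreeing at all $s_j$ have nearly equal functional values (via $\|u - Ru\|$ small together with uniform continuity) before the finite-dimensional function can be defined and extended continuously to a box containing its domain. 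Making this reduction rigorous, hand in hand with the Arzel\`a--Ascoli equicontinuity that lets a single net $\{s_j\}$ work uniformly over all of $V$, is the technical heart of the theorem; the two density applications are then routine consequences of $\sigma \in (TW)$.
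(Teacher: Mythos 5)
Your proposal is correct and follows essentially the same route as the original Chen--Chen argument, which is also the skeleton the paper itself uses when proving its modified Theorem~2: a trunk step that packages finite-dimensional approximation of the compact image $G(V)\subset C(K_2)$ together with continuous coefficient functionals (exactly Theorem~3 of \cite{Chen_ONN}, restated as Theorem~3 in the paper's appendix), followed by a branch step approximating those functionals through point samples $u(s_j)$ using the Arzel\`a--Ascoli equicontinuity of $V$ and a Tietze-type extension. Note that the paper imports this statement from \cite{Chen_ONN} without reproving it, so your reconstruction, including the correctly identified well-definedness subtlety of the non-injective sampling map, is faithful to the technical heart of the cited proof.
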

The classical universal approximation theorem holds for operators that map continuous functions $u\in V$ to continuous functions $C(K_2)$. This means that the universal approximation theorem is not directly applicable operators $\mathcal{G}$ of the form \eqref{eqn:operator}. Next, we present a slightly modified universal approximation theorem.
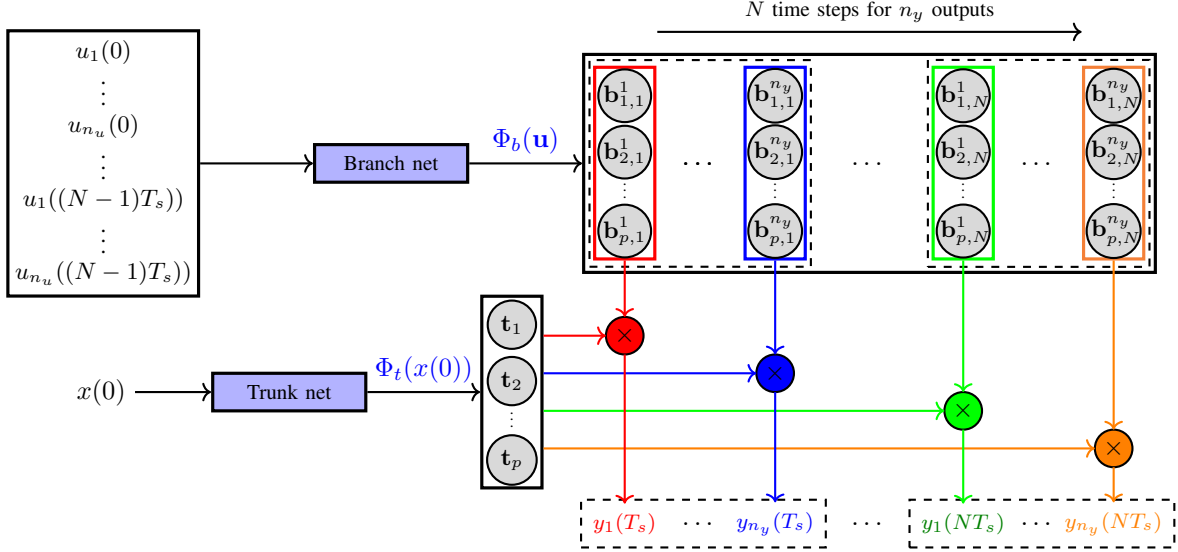
\begin{figure*}[t!]
        \centering
        \begin{tikzpicture}[
Block_purp/.style={draw=black, fill = DeePONet_purple, very thick, minimum width=1cm, minimum height=0.5cm},
Blockwhite/.style={draw=black, fill = white , very thick, minimum width=0.4cm, minimum height=0.5cm},
Circ_gray_prod/.style={draw=black, fill=DeePONet_gray, shape=circle, thick, minimum width=0.5cm, minimum height=0.5cm, inner sep=0pt, outer sep=0pt},
node distance=10mm  
]
\tikzset{
  Circ_gray/.style={
    draw=black,
    fill=DeePONet_gray,
    shape=circle,
    thick,
    minimum size=0.60cm,
    inner sep=0pt,
    outer sep=0pt,
    text width=0.60cm,
    align=center
  }
}

\tikzset{Circ_red/.style={draw=black, fill=red, shape=circle, thick, minimum width=0.5cm, minimum height=0.5cm, inner sep=0pt, outer sep=0pt}}

\tikzset{Circ_blue/.style={draw=black, fill=blue, shape=circle, thick, minimum width=0.5cm, minimum height=0.5cm, inner sep=0pt, outer sep=0pt}}

\tikzset{Circ_green/.style={draw=black, fill=green, shape=circle, thick, minimum width=0.5cm, minimum height=0.5cm, inner sep=0pt, outer sep=0pt}}

\tikzset{Circ_orange/.style={draw=black, fill=orange, shape=circle, thick, minimum width=0.5cm, minimum height=0.5cm, inner sep=0pt, outer sep=0pt}}

\coordinate (prod_1) at (0.0cm, 0.0cm);

\node[draw, fill=white, very thick, minimum width=0.8cm, minimum height=2.55cm, draw=red] (block_red) at (prod_1.east) [yshift= -0.5cm, xshift= -6.5cm] {};

\node[Circ_gray, anchor=center] (b1_1) at (block_red.north) [yshift=-0.4cm] {{\footnotesize $\mathbf{b}^1_{1,1}$}};  
\node[Circ_gray, anchor=center] (b1_2) at (block_red.north) [yshift=-1.15cm] {{\footnotesize $\mathbf{b}^{1}_{2,1}$}};  
\node[anchor=center] (ellipsis) at (block_red.north) [yshift=-1.6cm] {{\scalebox{0.6}{$\vdots$}}};  
\node[Circ_gray, anchor=center] (b1_3) at (block_red.north) [yshift=-2.2cm] {{\footnotesize $\mathbf{b}^{1}_{p,1}$}}; 

\node[draw, fill=white, very thick, minimum width=0.8cm, minimum height=2.55cm, draw=blue] (block_blue) at (prod_1.east) [yshift= -0.5cm, xshift= -4.5cm] {};

\node[Circ_gray, anchor=center] (b2_1) at (block_blue.north) [yshift=-0.4cm] {{\footnotesize $\mathbf{b}^{n_y}_{1,1}$}};  
\node[Circ_gray, anchor=center] (b2_2) at (block_blue.north) [yshift=-1.15cm] {{\footnotesize $\mathbf{b}^{n_y}_{2,1}$}};  
\node[anchor=center] (ellipsis) at (block_blue.north) [yshift=-1.6cm] {{\scalebox{0.6}{$\vdots$}}};  
\node[Circ_gray, anchor=center] (b2_3) at (block_blue.north) [yshift=-2.2cm] {{\footnotesize $\mathbf{b}^{n_y}_{p,1}$}}; 

\node[draw, black, thick, dashed, minimum width=2.4cm, minimum height=2.7cm, fit=(b1_1)(b1_2)(ellipsis)(b1_3)(b2_1)(b2_2)(b2_3)] (block_group_`1) {};

\node[anchor=center] (dots) at ($(block_red.east)!0.5!(block_blue.west)$) {\scalebox{1.0}{$\dots$}};  

\node[draw, fill=white, very thick, minimum width=0.8cm, minimum height=2.55cm, draw=green] (block_green) at (prod_1.east) [yshift= -0.5cm, xshift= -2.0cm] {};
\node[Circ_gray, anchor=center] (b3_1) at (block_green.north) [yshift=-0.4cm] {{\footnotesize $\mathbf{b}^1_{1,N}$}};  
\node[Circ_gray, anchor=center] (b3_2) at (block_green.north) [yshift=-1.15cm] {{\footnotesize $\mathbf{b}^{1}_{2,N}$}};  
\node[anchor=center] (ellipsis) at (block_green.north) [yshift=-1.6cm] {{\scalebox{0.6}{$\vdots$}}};  
\node[Circ_gray, anchor=center] (b3_3) at (block_green.north) [yshift=-2.2cm] {{\footnotesize $\mathbf{b}^{1}_{p,N}$}}; 

\node[draw, fill=white, very thick, minimum width=0.8cm, minimum height=2.55cm, draw=Orange] (block_orange) at (prod_1.east) [yshift= -0.5cm, xshift= 0.0cm] {};

\node[Circ_gray, anchor=center] (b4_1) at (block_orange.north) [yshift=-0.4cm] {{\footnotesize $\mathbf{b}^{n_y}_{1,N}$}};  
\node[Circ_gray, anchor=center] (b4_2) at (block_orange.north) [yshift=-1.15cm] {{\footnotesize $\mathbf{b}^{n_y}_{2,N}$}};  
\node[anchor=center] (ellipsis) at (block_orange.north) [yshift=-1.6cm] {{\scalebox{0.6}{$\vdots$}}};  
\node[Circ_gray, anchor=center] (b4_3) at (block_orange.north) [yshift=-2.2cm] {{\footnotesize $\mathbf{b}^{n_y}_{p,N}$}}; 

\node[draw, black, thick, dashed, minimum width=2.4cm, minimum height=2.7cm, fit=(b3_1)(b3_2)(ellipsis)(b4_3)(b4_1)(b4_2)(b4_3)] (block_group_2) {};

\node[anchor=center] (dots) at ($(block_red.east)!0.5!(block_blue.west)$) {\scalebox{1.0}{$\dots$}};  

\node[anchor=center] (dots) at ($(block_green.east)!0.5!(block_orange.west)$) {\scalebox{1.0}{$\dots$}};  

\node[anchor=center] (dots) at ($(block_blue.east)!0.5!(block_green.west)$) {\scalebox{1.0}{$\dots$}};  

\node[
  draw=black,
  very thick,
  minimum width=7.6cm,
  minimum height=2.9cm,
  fit=(b1_1)(b1_2)(b1_3)(b2_1)(b2_2)(b2_3)
       (b3_1)(b3_2)(b3_3)(b4_1)(b4_2)(b4_3),
  label={[yshift=0.1cm]above:{}}
] (big_group) {};

\draw[->, thick] 
  ([xshift=+1.0cm, yshift=1.75cm]big_group.west) 
    -- 
  ([xshift=-1.0cm, yshift=1.75cm]big_group.east) 
  node[midway, above] {\small $N$ time steps for $n_y$ outputs};

\node[Circ_red] (prod_red) at ($(block_red.south)+(0,-1.0cm)$) {$\times$};
\node[Circ_blue] (prod_blue) at ($(block_blue.south)+(0,-1.5cm)$) {$\times$};

\node[Circ_green] (prod_green) at ($(block_green.south)+(0,-2.0cm)$) {$\times$};
\node[Circ_orange] (prod_orange) at ($(block_orange.south)+(0,-2.5cm)$) {$\times$};

\node[draw, fill=white, very thick, minimum width=0.8cm, minimum height= 2.55cm, draw=black] (trunk_output) at (prod_orange.south) [xshift= -8.0cm, yshift= 1.0cm] {};

\node[Circ_gray, anchor=center] (b1_1) at (trunk_output.north) [yshift=-0.4cm] {{\footnotesize $\mathbf{t}_{1}$}};  
\node[Circ_gray, anchor=center] (b1_2) at (trunk_output.north) [yshift=-1.15cm] {{\footnotesize $\mathbf{t}_{2}$}};  
\node[anchor=center] (ellipsis) at (trunk_output.north) [yshift=-1.6cm] {{\scalebox{0.6}{$\vdots$}}};  
\node[Circ_gray, anchor=center] (b1_3) at (trunk_output.north) [yshift=-2.2cm] {{\footnotesize $\mathbf{t}_{p}$}};

\draw[->, thick, red] (prod_red.west  -| trunk_output.east) -- (prod_red.west)
  node[midway, above, xshift=0.0cm] {};

\draw[->, thick, blue] (prod_blue.west  -| trunk_output.east) -- (prod_blue.west)
  node[midway, above, xshift=0.5cm] {};

\draw[->, thick, green] (prod_green.west  -| trunk_output.east) -- (prod_green.west)
  node[midway, above, xshift=0.5cm] {};

\draw[->, thick, orange] (prod_orange.west  -| trunk_output.east) -- (prod_orange.west)
  node[midway, above, xshift=0.5cm] {};

\draw[->, thick, red] (block_red.south) -- (prod_red.north);
\draw[->, thick, blue] (block_blue.south) -- (prod_blue.north);
\draw[->, thick, green] (block_green.south) -- (prod_green.north);
\draw[->, thick, orange] (block_orange.south) -- (prod_orange.north);

\node (output_1) at (prod_red) [shift={(0.0cm,-2.5cm)}] {\small \textcolor{red}{$y_1(T_s)$}};
\node (output_2) at (prod_blue) [shift={(0.0cm,-2.0cm)}] {\small \textcolor{blue}{$y_{n_y}(T_s)$}};
\node (output_3) at (prod_green) [shift={(0.0cm,-1.5cm)}] {\small \textcolor{darkgreen}{$y_1(NT_s)$}};
\node (output_4) at (prod_orange) [shift={(0.0cm,-1.0cm)}] {\small \textcolor{orange}{$y_{n_y}(NT_s)$}};

\node (dots) at ($(prod_red)!0.5!(prod_blue) + (0,-2.25cm)$) {$\dots$};
\node (dots) at ($(prod_green)!0.5!(prod_orange) + (0,-1.25cm)$) {$\dots$};
\node (dots) at ($(prod_blue)!0.5!(prod_green) + (0,-1.75cm)$) {$\dots$};

\node[draw=black, thick, dashed, minimum width=2.3cm, minimum height=0.50cm, fit=(output_1)(output_2), inner sep=1pt] (box_outputs) {};

\node[draw=black, thick, dashed, minimum width=2.3cm, minimum height=0.50cm, fit=(output_3)(output_4), inner sep=1pt] (box_outputs) {};

\draw[->, thick, red] (prod_red.south) -- (output_1.north);
\draw[->, thick, blue] (prod_blue.south) -- (output_2.north);
\draw[->, thick, green] (prod_green.south) -- (output_3.north);
\draw[->, thick, orange] (prod_orange.south) -- (output_4.north);

\node[Block_purp] (trunk_net) [left=1.5cm of trunk_output] {\small \parbox{1.8cm}{\centering {Trunk net}}};

\node[Block_purp] (Branch_net) [left=1.5cm of big_group] {\small \parbox{1.8cm}{\centering {Branch net}}};

\node[Blockwhite] (input_1) [left=1.5cm of Branch_net] {\footnotesize \parbox{2.3cm}{\centering ${\hspace{-0.2mm} \begin{matrix}
    u_1(0) \\
    \vdots \\
    u_{n_u}(0) \\
    \vdots \\
    u_1((N-1)T_s)) \\
    \vdots \\
    u_{n_u}((N-1)T_s))
\end{matrix}}$}};

\node (input_trunk) at (trunk_net) [shift={(-2.5cm,0.0cm)}] {$x(0)$};

\draw[->, thick] (input_trunk.east) -- (trunk_net.west);
\draw[->, thick] (trunk_net.east) -- node[above]{\textcolor{blue}{$\Phi_t(x(0))$}}(trunk_output.west);
\draw[->, thick] (input_1.east) -- (Branch_net.west);
\draw[->, thick] (Branch_net.east) -- node[above]{\textcolor{blue}{$\Phi_b(\mathbf{u})$}}(big_group.west);

\end{tikzpicture}
        \caption{Multi-step DeepONet architecture: the branch input at time instant $t = 0$ is the multi-step input sequence $\bar\bu_0$; the trunk input is the measured state $x(0)$; the output is the predicted multi-step output sequence $\bar\bu_0$.}
        \label{fig:DeePONet_multi_step}
\end{figure*}

\begin{theorem}\label{theorem:universal_approx_2}
Suppose that $\sigma \in (TW)$, $K_1 \subset \mathbb{R}^{l}$ is a compact set in $\mathbb{R}^{l}$ and $K_2 \subset \mathbb{R}^d$ is a compact set in $\mathbb{R}^d$, $G$ is a nonlinear continuous operator, which maps a $K_1$ into $C(K_2)$. Then for any $\epsilon > 0$, there are positive integers $n$, $p$, constants $c_i^k$, $\xi_{i}^k\in\mathbb{R}^l$, $\theta_i^k$, $\zeta_k\in\mathbb{R}$ $w_k\in\mathbb{R}^d$, $i = 1,\dots,n$, $k=1,\dots,p$, such that 
\begin{equation}\label{eqn:universal_approx_trm1}
    \begin{split}
    \left| G(\mathbf{u})(x) -\sum_{k=1}^p\sum_{i=1}^n c_i^k \sigma \left( \xi_{i}^k \cdot \mathbf{u} + \theta_i^k\right)\sigma\left( w_k \cdot x + \zeta_k \right) \right| < \epsilon
\end{split}
\end{equation}
holds for all $\mathbf{u} \in K_1 $ and $x \in K_2$.
\end{theorem}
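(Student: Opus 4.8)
The plan is to obtain Theorem~\ref{theorem:universal_approx_2} as a direct corollary of Theorem~\ref{theorem:universal_approx} by realizing the finite-dimensional input vector $\mathbf{u}\in K_1\subset\mathbb{R}^l$ as a continuous function sampled at finitely many sensor locations. Concretely, I would fix $l$ distinct points $s_1,\dots,s_l\in\mathbb{R}$ and set $\tilde K_1:=\{s_1,\dots,s_l\}$, regarded as a compact subset of the Banach space $X=\mathbb{R}$. Since $\tilde K_1$ is finite and hence discrete in its subspace topology, every function on it is automatically continuous, so $C(\tilde K_1)$ is isometrically isomorphic to $(\mathbb{R}^l,\|\cdot\|_\infty)$ via the linear map $\iota:\mathbf{u}\mapsto\tilde u$ defined by $\tilde u(s_j)=[\mathbf{u}]_j$. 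This identification is the crux of the reduction: it turns the branch network's sensor evaluations $u(s_j)$ appearing in Theorem~\ref{theorem:universal_approx} into the coordinates of the input vector $\mathbf{u}$.

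With this identification fixed, I would transport the data of Theorem~\ref{theorem:universal_approx_2} through $\iota$. Set $V:=\iota(K_1)\subset C(\tilde K_1)$; because $\iota$ is a homeomorphism and $K_1$ is compact, $V$ is compact. Define $\tilde G:=G\circ\iota^{-1}:V\to C(K_2)$, which is a continuous nonlinear operator since $G$ is continuous and $\iota^{-1}$ is an isometry. All hypotheses of Theorem~\ref{theorem:universal_approx} are then in place, with $X=\mathbb{R}$, compact $\tilde K_1\subset X$, compact $V\subset C(\tilde K_1)$, and compact $K_2\subset\mathbb{R}^d$. Hence for any $\epsilon>0$ the theorem supplies integers $n,p,m$, sensor points $s_j'\in\tilde K_1$ and constants for which the bound \eqref{eqn:universal_approx_trm} holds, where $\tilde G(\tilde u)(x)=G(\mathbf{u})(x)$ by construction and the $K_2$-argument (denoted $z$ in Theorem~\ref{theorem:universal_approx}) is identified with $x\in K_2$, so the trunk factor $\sigma(w_k\cdot x+\zeta_k)$ carries over verbatim.

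The remaining step is to collapse the inner sensor sum into a single inner product, matching the target form \eqref{eqn:universal_approx_trm1}. Because every sensor point $s_j'$ lies in the finite set $\tilde K_1=\{s_1,\dots,s_l\}$, the value $\tilde u(s_j')$ is always one of the coordinates $[\mathbf{u}]_1,\dots,[\mathbf{u}]_l$. Grouping the terms of $\sum_{j=1}^m\xi_{ij}^k\tilde u(s_j')$ according to which base point each $s_j'$ equals yields a linear functional of $\mathbf{u}$, namely $\sum_{j=1}^m\xi_{ij}^k\tilde u(s_j')=\xi_i^k\cdot\mathbf{u}$ with $[\xi_i^k]_r:=\sum_{j:\,s_j'=s_r}\xi_{ij}^k$, where any coordinate not hit by a sensor receives coefficient zero. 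Substituting this back reproduces exactly the approximant in \eqref{eqn:universal_approx_trm1}, completing the proof.

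I expect the only genuinely delicate point to be the bookkeeping in this final collapse, in particular checking that it is robust to Theorem~\ref{theorem:universal_approx} choosing an arbitrary number $m$ of sensors that may repeat base points or omit some of them, rather than returning exactly the $l$ base points. Everything else is a routine transfer of compactness and continuity through the isometry $\iota$; the conceptual content lies entirely in recognizing that a ZOH, finite-length input renders the input ``function space'' finite-dimensional, so that the function-evaluation branch of Chen and Chen degenerates into the plain affine map $\mathbf{u}\mapsto\xi_i^k\cdot\mathbf{u}+\theta_i^k$.
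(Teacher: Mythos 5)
Your proof is correct, but it takes a genuinely different route from the paper. You reduce the finite-dimensional statement to the operator theorem (Theorem~\ref{theorem:universal_approx}) by the isometric identification $\mathbb{R}^l \cong C(\{s_1,\dots,s_l\})$, so that the sensor evaluations $u(s_j)$ in \eqref{eqn:universal_approx_trm} degenerate into coordinates of $\mathbf{u}$, and your bookkeeping for collapsing $\sum_{j=1}^m \xi_{ij}^k \tilde u(s_j')$ into $\xi_i^k\cdot\mathbf{u}$ — summing coefficients over repeated base points and zeroing omitted ones — is exactly the delicate step, and you handle it correctly; the instantiation of the Banach-space hypotheses with a finite compact set is legitimate since every function on a finite discrete set is continuous. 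The paper instead re-runs the Chen--Chen two-stage argument from scratch at the level of the more primitive Theorem~\ref{theorem_2}: it first notes that $G(K_1)$ is compact in $C(K_2)$ and applies Theorem~\ref{theorem_2} to get the trunk factor with functional coefficients $c_k(G(\mathbf{u}))$ to tolerance $\epsilon/2$ as in \eqref{eqn:theorem_3_eq1}, then observes that $c_k\circ G \in C(K_1)$ and applies Theorem~\ref{theorem_2} again to approximate each coefficient to tolerance $\epsilon/(2L)$ with $L=\sum_{k=1}^p \sup_{x\in K_2}|\sigma(\omega_k\cdot x+\zeta_k)|$ as in \eqref{eqn:theorem_3_eq2}, finally padding the inner sums with zero coefficients to a common width $n=\max_k n_k$. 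Your reduction is shorter and reuses the cited operator theorem as a black box; the paper's argument is self-contained modulo Theorem~\ref{theorem_2}, makes the error-splitting and the role of the bound $L$ explicit, and avoids leaning on the exact hypotheses of Theorem~\ref{theorem:universal_approx} in a degenerate setting — a trade-off of economy against transparency, with both proofs sound.
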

\begin{proof} From the assumption that $G$ is a continuous operator which maps a compact set $K_1$ of $\mathbb{R}^l$ into $C(K_2)$, it is straightforward to prove that the range $G(K_1) = \{G(u):u\in K_1\}$ is also a compact set in $C(K_2)$. By Theorem~\ref{theorem_2} given in the Appendix, for any $\epsilon>0$, there are real numbers $c_k(G(u))$ and $\zeta_k$, vectors $\omega_k\in\mathbb{R}^d$, $k=1,\dots,p$, such that 
    \begin{equation}\label{eqn:theorem_3_eq1}
       \left| G(\mathbf{u})(x)-\sum_{k=1}^p c_k(G(\mathbf{u}))\sigma(\omega_k\cdot x + \zeta_k) \right| < \epsilon/2,
    \end{equation}
    holds for all $\mathbf{u} \in K_1 $ and $x \in K_2$. Note that \( G: K_1 \rightarrow C(K_2) \) is a continuous mapping and \( c_k : C(K_2) \rightarrow \mathbb{R} \) is a linear continuous functional. Hence, the composition \( c_k \circ G : K_1 \rightarrow \mathbb{R} \) is a continuous function on \( K_1 \) (Proposition~8.4, \cite[s.~84]{sutherland2009introduction}). This shows that $c_k(G(\mathbf{u}))\in C(K_1)$ and we can apply Theorem~\ref{theorem_2} again to show that 
    for any $\epsilon>0$, there are a positive integer $p_k$, real numbers $c^k_i$, $\theta^k_i$ and vectors $\xi^k_i\in\mathbb{R}^l$, where $k=1,\dots,p$, , $i=1,\dots,n_k$, such that 
    \begin{equation}\label{eqn:theorem_3_eq2}
       \left|   c_k(G(\mathbf{u}))-\sum_{i=1}^{n_k} c^k_i \sigma(\xi^k_i \cdot \mathbf{u} + \theta^k_i) \right|  < \frac{\epsilon}{2 L},
    \end{equation}
    holds for all $k=1,\dots,p$ and $\mathbf{u}\in K_1$, where
    $$
    L = \sum_{k=1}^p\sup_{x \in K_2}\left| \sigma(\omega_k\cdot x + \zeta_k))\right|.
    $$
Substitute \eqref{eqn:theorem_3_eq2} into \eqref{eqn:theorem_3_eq1}, and let $n = \max_{k}n_k$  and $c_{k}^i = 0$ for all $n_k<i\leq n$, then we obtain
\begin{equation}\label{eqn:universal_approx_trm2}
    \begin{split}
    \bigg| G(\mathbf{u})(x) -\sum_{k=1}^p\sum_{i=1}^n c_i^k \sigma \left( \xi_{i}^k \cdot \mathbf{u} + \theta_i^k\right)\sigma\left( w_k \cdot x + \zeta_k \right) \bigg| < \epsilon
\end{split}
\end{equation}
for all $\mathbf{u} \in K_1 $ and $x \in K_2$, which finalizes the proof.
\end{proof}
Note that this removes the need for multiple branch networks to encode each input signal individually, as for the standard DeepONet. With this in mind, we present a new DeepONet structure in the next subsection. 
\subsection{MS-DeepONet architecture}\label{subsec:MS_architecture}
In this section we present a novel Multi-Step DeepONet (MS-DeepONet) architecture. An adaptation compared to the standard DeepONet is that a single branch network is used to encode the complete sequence of inputs $\bu_k$ at once. The second adaption is the addition of $p$ neurons in the output layer for each output of the system and each prediction step in time, leading to a total of $pn_yN$ linear activation functions in the output layer of the branch network to implicitly encode $N$ time steps; see \figurename~\ref{fig:DeePONet_multi_step}. This is different from the standard DeepONet where each branch network has $pn_y$ linear activation functions in each output layer of the $n_u$ branch networks and time is given as input to the trunk network. The output layer of the branch net of the MS-DeepONet takes in the vector $\Phi_b(\mathbf{u}_k) := \col(\phi_b^1(\mathbf{u}_k),\dots,\phi_b^{n_b}(\mathbf{u}_k))$ and has linear neurons in the output layer: $\mathbf{b}^q_{i,j}(\mathbf{u}_k)$ with $i=1,\dots,p$, $j=1,\dots,N$ and $q = 1,\dots,n_y$ given by 
\begin{align}\label{eqn:branch_def}
    \mathbf{b}^q_{i,j}(\mathbf{u}_k) = \sum_{l=1}^{n_b} w_{i,j}^{q,l} \phi^l_b(\mathbf{u}_k) + \xi_{i,j}^q,
\end{align}
with weights $w_{i,j}^{q,l}\in \mathbb{R}$ and biases $\xi_{i,j}^q\in\mathbb{R}$. The output layer of the trunk net takes in the vector $\Phi_t(x_k) := \col(\phi_t^1(x_k),\dots,\phi_t^{n_t}(x_k))$ and has linear activation functions in the output layer: $\mathbf{t}_{i}(x(k))$ with $i=1,\dots,p$ given by 
\begin{align}\label{eqn:trunk_def}
\mathbf{t}_{i}(x_k) = \sum_{k=1}^{n_t} 
\alpha^k_{i} \phi_t^k(x_k) + \zeta_{i}.
\end{align}
The product layer is then used to compute the output as follows:
\begin{equation}\label{eqn:output_MS_DeePONet}
    [\mathbf{y}_k]_{q+(j-1)n_y} = \sum_{i=1}^{p} \mathbf{b}^q_{i,j}(\mathbf{u}_k) \mathbf{t}_i(x_k)
\end{equation}
where the same indices are used as in \figurename~\ref{fig:DeePONet_multi_step}. 
\begin{remark}
    From Proposition~\ref{proposition:stacked_unstacked} in the Appendix and Theorem~\ref{theorem:universal_approx_2} it follows that for any $\epsilon>0$ there exist a branch network and trunk network as defined in \eqref{eqn:branch_def} and \eqref{eqn:trunk_def} respectively, such that
\begin{equation}\label{eqn:universal_approx_trm3}
    \begin{split}
    \bigg| \mathcal{G}^{q+(j-1)n_y}(\mathbf{u}_k)(x_k) - \sum_{i=1}^{p} \mathbf{b}^q_{i,j}(\mathbf{u}_k) \mathbf{t}_i(x_k) \bigg| < \epsilon.
\end{split}
\end{equation}
This implies that the MS-DeepONet is a universal approximation for MIMO operators $\mathcal{G}$ of the from \eqref{eqn:operator}.
\end{remark}
For convenience of notation, let
\begin{align}\label{eqn:trunk_matrix}
    \mathbf{t}(x_k):= \col(\mathbf{t}_1(x_k),\dots,\mathbf{t}_p(x_k)),
\end{align}
and let
\begin{equation}\label{eqn:branch_matrix}
    \begin{aligned}
    &\mathbf{B}(\mathbf{u}_k) := \\
    &\begin{bmatrix}
        \mathbf{b}^1_{1,1}(\mathbf{u}_k)  \dots  \mathbf{b}^{n_y}_{1,1}(\mathbf{u}_k) & \dots &  \mathbf{b}^1_{1,N}(\mathbf{u}_k)  \dots  \mathbf{b}^{n_y}_{1,1}(\mathbf{u}_k) \\
        \mathbf{b}^1_{2,1}(\mathbf{u}_k)  \dots  \mathbf{b}^{n_y}_{2,1}(\mathbf{u}_k) & \dots &  \mathbf{b}^1_{2,N}(\mathbf{u}_k)  \dots  \mathbf{b}^{n_y}_{2,1}(\mathbf{u}_k) \\
        \vdots &  \dots & \vdots \\
        \mathbf{b}^1_{p,1}(\mathbf{u}_k)  \dots  \mathbf{b}^{n_y}_{p,1}(\mathbf{u}_k) & \dots &  \mathbf{b}^1_{p,N}(\mathbf{u}_k)  \dots  \mathbf{b}^{n_y}_{p,1}(\mathbf{u}_k) 
    \end{bmatrix},
\end{aligned}
\end{equation}
then the MS-DeepONet can be expressed as a single matrix vector equation, i.e.,
\begin{align}\label{eqn:DeepONet_output}
    \mathbf{y}_k := \mathbf{B}^\top(\mathbf{u}_k)\mathbf{t}(x_k).
\end{align} 
In the next subsection, we derive the underlying basis of the proposed MS-DeepONet, providing new insights in the underlying structure of DeepONet. 

\subsection{The underlying structure of the MS-DeepONet architecture: an addaptive basis viewpoint}\label{subsec:basis}
This subsection examines the structure of MS-DeepONet, highlighting its computational advantages over traditional feedforward networks in predictive control. It also enables developing direct data-driven predictive control schemes \cite{Basis_Lazar} and alternative training and initialization methods \cite{Adaptive_basis_Cyr}. In order to extract a candidate basis, for the MS-DeepONet output equation \eqref{eqn:DeepONet_output}, define the bias coefficient vector and weight matrix for the branch net 
\begin{align}\label{eqn:branch_coef}
    \boldsymbol{\xi}^b_{r} = \begin{bmatrix}
        \xi_{1,j}^q \\
        \xi_{2,j}^q \\
        \vdots \\
        \xi_{p,j}^q 
    \end{bmatrix}, \quad  \mathbf{W}^b_{r} = \begin{bmatrix}
        w_{1,j}^{q,1} &  w_{1,j}^{q,2} & \dots & w_{1,j}^{q,n_b} \\
       w_{2,j}^{q,1} & w_{2,j}^{q,2} & \dots & w_{2,j}^{q,n_b} \\
        \vdots \\
         w_{p,j}^{q,1} & w_{p,j}^{q,2} & \dots & w_{p,j}^{q,n_b} \\
    \end{bmatrix},
\end{align}
with $r = q+(j-1)n_y$ and $q = 1,\dots,n_y$, $j = 1, \dots, N$. Similarly, define the bias coefficient vector and weight matrix for the trunk net
\begin{align}\label{eqn:trunk_coef}
    \boldsymbol{\zeta}^t = \begin{bmatrix}
        \zeta_1 \\
        \zeta_2 \\
        \vdots \\
        \zeta_p 
    \end{bmatrix}, \quad  \mathbf{W}^t = \begin{bmatrix}
        \alpha_{1}^1 &  \alpha_{1}^2 & \dots & \alpha_{1}^{n_t} \\
        \alpha_{2}^1 &  \alpha_{2}^2 & \dots & \alpha_{2}^{n_t} \\
        \vdots \\
        \alpha_{p}^1 &  \alpha_{p}^2 & \dots & \alpha_{p}^{n_t}\\
    \end{bmatrix}.
\end{align}
The MS-DeepONet output equation \eqref{eqn:output_MS_DeePONet} can be written component-wise as:
\begin{align}\label{eqn:multi_step_output}
    [\mathbf{y}_k]_i = \left( \mathbf{W}^b_i \Phi_b(\mathbf{u}_k)+\boldsymbol{\xi}^b_i \right)^\top \left( \mathbf{W}^t \Phi_t(x_k)+\boldsymbol{\zeta}^t \right),
\end{align}
with $i = 1,\dots,Nn_y$. In the following Lemma we derive a candidate basis for the MS-DeepONet. 
\begin{lemma}\label{lemma:basis}
    There exist a coefficient matrix $\Theta_o$ and a vector $\Phi_{\otimes}(\mathbf{u}_k,\mathbf{x}_0)$ such that 
    \begin{align}\label{eqn:output_basis}
        \mathbf{y}_k = \Theta_o\Phi_{\otimes}(\mathbf{u}_k,x_k):=\Theta_o \begin{bmatrix}
        \Phi_b(\mathbf{u}_k)\otimes \Phi_t(x_k) \\
        \Phi_b(\mathbf{u}_k) \\
        \Phi_t(x_k) \\
        1
    \end{bmatrix},
    \end{align}
    where the coefficient matrix contains all the weights and biases from the branch and trunk output layers, i.e., 
    \begin{equation}\label{eqn:basis}
    \begin{aligned}
       &\Theta_o = \\
       &\begin{bmatrix}
    \text{vec}^{\top}(\mathbf{W}_1^{b^\top} \mathbf{W}^t) & \boldsymbol{\zeta}^{t^\top}\mathbf{W}^b_1 & \boldsymbol{\xi}_1^{b^\top}\mathbf{W}^t & \boldsymbol{\xi}_1^{b^\top} \boldsymbol{\zeta}^t \\ 
    \vdots  & \vdots & \vdots & \vdots \\
    \text{vec}^{\top}(\mathbf{W}_{n_yN}^{b^\top} \mathbf{W}^t) & \boldsymbol{\zeta}^{t^\top} \mathbf{W}^b_{n_yN} & \boldsymbol{\xi}_{n_yN}^{b^\top}\mathbf{W}^t & \boldsymbol{\xi}_{n_yN}^{b^\top} \boldsymbol{\zeta}^t \\ 
    \end{bmatrix}.
    \end{aligned}
    \end{equation}
\end{lemma}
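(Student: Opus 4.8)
The plan is to derive the basis representation \eqref{eqn:output_basis} directly from the component-wise output equation \eqref{eqn:multi_step_output} by expanding each bilinear form and re-collecting the terms against a common, state- and input-dependent set of basis functions. First I would fix an output index $i\in\{1,\dots,Nn_y\}$ and multiply out the two affine maps appearing in \eqref{eqn:multi_step_output}. Writing $\Phi_b := \Phi_b(\mathbf{u}_k)$ and $\Phi_t := \Phi_t(x_k)$ for brevity, this yields
\begin{equation*}
[\mathbf{y}_k]_i = \Phi_b^\top \mathbf{W}_i^{b\top}\mathbf{W}^t \Phi_t + \Phi_b^\top \mathbf{W}_i^{b\top}\boldsymbol{\zeta}^t + \boldsymbol{\xi}_i^{b\top}\mathbf{W}^t \Phi_t + \boldsymbol{\xi}_i^{b\top}\boldsymbol{\zeta}^t .
\end{equation*}
This isolates exactly four contributions: one bilinear in $(\Phi_b,\Phi_t)$, two affine (one in $\Phi_b$, one in $\Phi_t$), and one constant, which is precisely the partition mirrored by the four column blocks of $\Theta_o$ in \eqref{eqn:basis} and the four stacked blocks of $\Phi_{\otimes}$.

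The key step is to linearize the bilinear term so that it becomes an inner product against a fixed basis vector independent of the weights. Using the vectorization identity $a^\top M c = \text{vec}(\cdot)^\top(a\otimes c)$ for compatible $a$, $M$, $c$, I would rewrite $\Phi_b^\top(\mathbf{W}_i^{b\top}\mathbf{W}^t)\Phi_t$ as the inner product of $\text{vec}^\top(\mathbf{W}_i^{b\top}\mathbf{W}^t)$ with the Kronecker vector $\Phi_b\otimes\Phi_t$. The two remaining affine terms are already linear in a single basis block: $\Phi_b^\top\mathbf{W}_i^{b\top}\boldsymbol{\zeta}^t = \boldsymbol{\zeta}^{t\top}\mathbf{W}_i^b\Phi_b$ pairs the row coefficient $\boldsymbol{\zeta}^{t\top}\mathbf{W}_i^b$ with $\Phi_b$, while $\boldsymbol{\xi}_i^{b\top}\mathbf{W}^t\Phi_t$ pairs $\boldsymbol{\xi}_i^{b\top}\mathbf{W}^t$ with $\Phi_t$; the last summand $\boldsymbol{\xi}_i^{b\top}\boldsymbol{\zeta}^t$ multiplies the constant $1$. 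Assembling the four coefficient blocks into a single row and the four groups of functions into $\Phi_{\otimes}(\mathbf{u}_k,x_k) = \col(\Phi_b\otimes\Phi_t,\ \Phi_b,\ \Phi_t,\ 1)$ expresses $[\mathbf{y}_k]_i$ as the inner product of that row with $\Phi_{\otimes}$.

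Finally, I would let $i$ range over $1,\dots,Nn_y$ and stack the resulting rows. Since $\Phi_{\otimes}(\mathbf{u}_k,x_k)$ does not depend on $i$, this produces the single matrix--vector identity $\mathbf{y}_k = \Theta_o\Phi_{\otimes}(\mathbf{u}_k,x_k)$ with $\Theta_o$ exactly as in \eqref{eqn:basis}. The step requiring the most care is the bilinear term: one must reconcile the column-major vectorization convention fixed in the Notation (where $\text{vec}(A)=\col(a,c,b,d)$) with the chosen Kronecker ordering $\Phi_b\otimes\Phi_t$ so that the coefficient emerges as $\text{vec}^\top(\mathbf{W}_i^{b\top}\mathbf{W}^t)$ rather than its transposed or index-reordered variant. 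The remaining manipulations are routine and follow purely from the bilinearity of the product layer.
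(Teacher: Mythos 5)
Your proposal is correct and takes essentially the same route as the paper's proof: expand the bilinear product layer \eqref{eqn:multi_step_output} into its four terms, convert the bilinear term via the vectorization identity into an inner product against a Kronecker block, and stack the $Nn_y$ resulting rows into $\Theta_o$. The ordering subtlety you flag is genuine---the paper's own derivation in fact produces the ordering $\Phi_t(x_k)\otimes\Phi_b(\mathbf{u}_k)$, so pairing with the block $\Phi_b(\mathbf{u}_k)\otimes\Phi_t(x_k)$ as written in \eqref{eqn:output_basis} would require the coefficient $\text{vec}^{\top}(\mathbf{W}^{t^\top}\mathbf{W}_i^{b})$ rather than $\text{vec}^{\top}(\mathbf{W}_i^{b^\top}\mathbf{W}^t)$---but this discrepancy only permutes entries of $\Theta_o$ and affects neither your argument's validity nor the existence claim of the lemma.
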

\begin{proof} To prove the statement we consider an arbitrary output $[\mathbf{y}_k]_i$, i.e., the $i$-th element of the output vector $\mathbf{y}_k$. Then \eqref{eqn:multi_step_output} becomes 
    \begin{align*}
        [\mathbf{y}_k]_i &= \left( \mathbf{W}_i^b   \Phi_b(\mathbf{u}_k) + 
 \boldsymbol{\xi}_i^b \right)^\top \left( \mathbf{W}^t   \Phi_t(x_k) + 
 \boldsymbol{\zeta}^t \right) \\ 
 &= \left(\Phi_b^{\top}(\mathbf{u}_k) \mathbf{W}_i^{b^\top} 
 + \boldsymbol{\xi}_i^{b^\top} \right) \left( \mathbf{W}^t   \Phi_t(x_k) + 
 \boldsymbol{\zeta}^t \right) \\
 & = \Phi_b^{\top}(\mathbf{u}_k) \mathbf{W}_i^{b^\top} \mathbf{W}^t   \Phi_t(\mathbf{x}_0) + \Phi_b^{\top}(\mathbf{u}_k) \mathbf{W}_i^{b^\top} \boldsymbol{\zeta}^t  \\
 &  \quad + \boldsymbol{\xi}_i^{b^\top} \mathbf{W}^t \Phi_t(x_k) + \boldsymbol{\xi}_i^{b^\top} \boldsymbol{\zeta}^t \\
  & = \Phi_b^{\top}(\mathbf{u}_k) \mathbf{W}_i^{b^\top} \mathbf{W}^t   \Phi_t(x_k) + \boldsymbol{\zeta}^{t^\top}  \mathbf{W}_i^{b}  \Phi_b(\mathbf{u}_k) \\
  & \quad + \boldsymbol{\xi}_i^{b^\top} \mathbf{W}^t \Phi_t(x_k) + \boldsymbol{\xi}_i^{b^\top} \boldsymbol{\zeta}^t.
    \end{align*}
    We can further rewrite the first term by using the the properties of the Kronecker product and the vectorization operator, i.e., 
    \begin{align*}
        &\Phi_b^{\top}(\mathbf{u}_k) \mathbf{W}_i^{b^\top} \mathbf{W}^t   \Phi_t(x_k)  \\
        &\quad \quad  \quad \quad =\text{vec}(\Phi_b^{\top}(\mathbf{u}_k) \mathbf{W}_i^{b^\top} \mathbf{W}^t   \Phi_t(x_k)) \\
        & \quad  \quad \quad  \quad = (\Phi_t^{\top}(x_k) \otimes  \Phi_b^{\top}(\mathbf{u}_k) ) \text{vec}(\mathbf{W}_i^{b^\top} \mathbf{W}^t)  \\
         &\quad \quad \quad  \quad = \text{vec}^{\top}(\mathbf{W}_i^{b^\top} \mathbf{W}^t) (\Phi_t(x_k) \otimes  \Phi_b(\mathbf{u}_k) ).
    \end{align*}
    If we repeat the same procedure for the other indices $i=1,\dots,n_yN$ we get the expression for $\Theta_o$ in \eqref{eqn:basis}, which finalizes the proof.
\end{proof}
$\Phi_{\otimes}(\mathbf{u}_k,x_k)$ is called a candidate basis because it does not necessarily satisfy the linear independence condition, even though this is highly likely to happen after training; see, e.g. \cite{Widrow_2013}; future work will deal with extracting a basis via proper orthogonal decomposition, in case the linear independence condition does not hold. 

\begin{remark}
        From the proof of Lemma~\ref{lemma:basis}, it is straightforward to show that the underlying basis for the standard unstacked DeepOnet in the SISO case is of the form
\begin{equation}
    y_{j|k} = \theta_o \phi_\otimes(\bu_k,z_{j,k}):= \theta_o \begin{bmatrix}
        \phi_b(\mathbf{u}_k)\otimes \phi_t(z_{j,k}) \\
        \phi_b(\mathbf{u}_k) \\
        \phi_t(z_{j,k}) \\
        1
        \end{bmatrix},
\end{equation}
where $\theta_o\in\mathbb{R}^{1\times n_tn_b + n_t +n_t+1}$. Note that this is not necessarily true for the MIMO case. 
\end{remark}
\begin{remark}
\label{rem:alt:base}
    By using the properties of the Kronecker product, the basis representation \eqref{eqn:output_basis} can be equivalently represented as
    \begin{equation}
        \mathbf{y}_k = \mathbf{\Theta}_o(x_k) \begin{bmatrix}
          \Phi_b(\mathbf{u}_k) \\
            1
        \end{bmatrix},
    \end{equation}   
    where  
    \begin{equation}
    \mathbf{\Theta}_o(x_k) := \Theta_o  \begin{bmatrix}
            \left( I_{n_t} \otimes \Phi_t(x_k)\right) & 0 \\
            I_{n_b} & 0 \\
            0 & \Phi_t(x_k) \\
            0 & 1 
        \end{bmatrix}. 
    \end{equation}
After substituting the initial condition $x_k$ into $\mathbf{\Theta}_o(x_k)$ becomes a constant matrix in $\mathbb{R}^{Nn_y \times n_b+1}$, the dimension of the prediction model is completely independent of the dimensions of the trunk net. 
\end{remark}
By deriving an explicit candidate basis for the MS-DeepONet architecture as in \eqref{eqn:output_basis} we revealed the underlying structure and computational advantages. Moreover, this opens up the possibility of using the MS-DeepONet for data-enabled predictive control (DeePC), see \cite{Basis_Lazar}. In the next section, we present a framework for training the MS-DeepONet from data.

\section{MULTI-STEP DEEPONET LEARNING AND HYPER PARAMETER TUNING ALGORITHMS}\label{sec:ablation_study}

This section describes the model learning approach and the process for selecting optimal hyper parameters for the MS-DeepONet. A flowchart illustrating the relationships between the relevant algorithms is provided in Figure~\ref{fig:flow_chart_algorithms}.
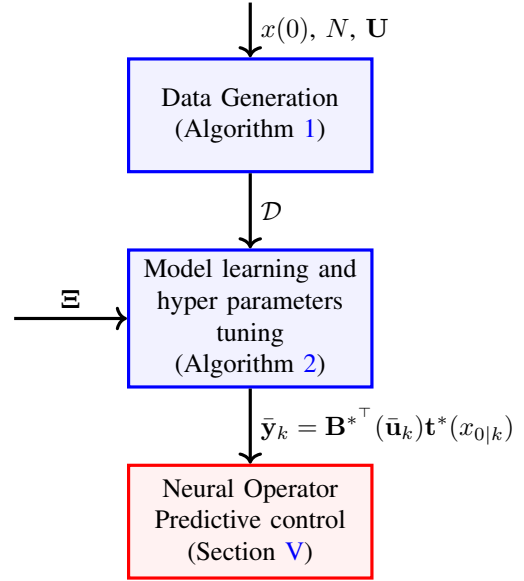
\begin{figure}[h!]
        \centering
        \begin{center}
\begin{tikzpicture}[
Block1/.style={draw=blue, fill = blue!5, very thick,  minimum width=2cm, minimum height=1.5cm},
Block2/.style={draw=red, fill = red!5, very thick,  minimum width=2cm, minimum height=1.5cm},
Block3/.style={draw=red, fill = red!5, very thick,  minimum width=1.2cm, minimum height=1.2cm},
Block4/.style={draw=black, fill = black!5, very thick,  minimum width=1.5cm, minimum height=1.5cm},
node distance=10mm  
]
 \node[Block1] (block1) {\parbox{3cm}{\centering Data Generation \\ (Algorithm \ref{alg:1})}};
 \node[Block1] (block2) [below=of block1] {\parbox{3cm}{\centering Model learning and hyper parameters tuning \\ (Algorithm \ref{alg:2})}};


    
    

\node[Block2] (block3) [below=of block2] {\parbox{3cm}{\centering Neural Operator Predictive control \\ (Section \ref{sec6})}};

 \draw[->, very thick] (block2.west) ++(-15mm,0) -- (block2.west) node[midway, above] {$\mathbf{\Xi}$};
 \draw[->, very thick] ++(0,15mm) -- (block1.north) node[midway, right] {$x(0)$, $N$, $\mathbf{U}$};
 \draw[->, very thick] (block2.south) -- (block3.north) node[midway, right] {$\bar\by_k =\mathbf{B}^{*^\top}(\bar \bu_k)\mathbf{t}^*(x_{0|k})$};
 \draw[->, very thick] (block1.south) -- (block2.north) node[midway, right] {$\mathcal{D}$};

\end{tikzpicture}
\end{center}
        \caption{Flow chart for model learning algorithms and predictive control. Red colors indicate online algorithms and blue colors offline.}
        \label{fig:flow_chart_algorithms}
\end{figure}
\subsection{Data-generation}\label{subsec:data_gen}
Let $\mathbf{U} := \col(u(0), u(1), \dots, u(T+N-2))$ denote the input signal over the experiment duration, where $T \in \mathbb{N}$ is the batch size during training, and $N \in \mathbb{N}$ is the prediction horizon. Given an initial condition $x(0)$, state and output data $\mathbf{x}$ and $\mathbf{y}$ is generated by applying $\mathbf{u}$ to the system~\eqref{eqn:systems} as outlined in Algorithm~\ref{alg:1}. The structure of the data matrices depends on the type of prediction model that is learned. Specifically, for the MS-DeepONet model, described in subsection~III.\ref{subsec:MS_architecture}, the data matrices are defined as follows: $\mathcal{U} = \mathcal{H}^0_{[N,T]}(\mathbf{U})$, $\mathcal{Y} = \mathcal{H}^1_{[N,T]}(\mathbf{Y})$, $\mathcal{Z} = \mathcal{H}^0_{[1,T]}(\mathbf{X})$. The input generation can be done according to nonlinear system identification guidelines. The input and output dimensions of the sub-networks in the proposed DeepONet architecture during training are shown in TABLE~\ref{tab:dimensions2}.
\begin{algorithm}[t!]
\caption{Data-generation for MS-DeepONet}\label{alg:1}
\begin{algorithmic}[1]
\State \textbf{Input:} 
\State \quad $x(0)$: (\textcolor{black}{initial condition})
\State \quad $N$: (\textcolor{black}{prediction horizon})
\State \quad $\mathbf{U}$: (\textcolor{black}{input data samples})
\State \textbf{Output:} 
\State \quad $\mathcal{D}$: (\textcolor{black}{data set for model learning})
\State \textbf{Step 1:} Generate piecewise constant input signal:
\State \quad $\{u(j)\}_{j\in[iT_s,(i+1)T_s)} = [\mathbf{U}]_i, \quad \forall i\in [0,T+N-2]$
\State \textbf{Step 2:} For time interval for $t \in [0,(T+N-2)T_s)$, perform experiment for system~\eqref{eqn:systems}:
\State \quad $x(0) = \mathbf{x}_0$ (set initial condition)
\State \quad Apply input $u(t)$ to system  
\State \quad $[\mathbf{X}]_i := x(i T_s)$ (append state data sample)
\State \quad $[\mathbf{Y}]_i = y(i T_s)$ (append output data sample)
\State \textbf{Step 3:} Construct the data set:
    \State \quad $\mathcal{D} = \{\mathcal{Y}, \mathcal{U}, \mathcal{Z}\}$
\end{algorithmic}
\end{algorithm}
In the next subsection we present the learning algorithm and hyper parameter tuning approach. 
\subsection{Model learning and hyper parameter tuning}\label{subsec:ablation}
This section discusses the prediction model learning and hyper parameter tuning approach for the MS-DeepONet. We firstly fix a set of tunable hyper parameters. To this end, consider the expressions for the hidden layers of the branch and trunk networks respectively:
\begin{subequations}
\label{eqn:hidden_layers}
\begin{align}
 h_i^b &= \sigma(W_{i-1}^b h_{i-1}^b + b_{i-1}^b), \quad i = 1,\dots,l_b,\label{eqn:hidden_layers_a} \\
h_i^t &= \sigma(W_{i-1}^t h_{i-1}^t + b_{i-1}^t), \quad i = 1,\dots,l_t, \label{eqn:hidden_layers_b}
\end{align}
\end{subequations}
where the weight matrices and bias vectors for the trunk net are defined as $W_i^t \in \mathbb{R}^{n_{t,i}\times n_{t,i+1}}$ and $b^t_i \in \mathbb{R}^{n_{t,i}}$, with $i=0,\dots,l_t-1$ and $n_{t,0}=n_x$ and $n_{t,l_t}=n_t$. Similarly, the weight matrices and bias vectors for the branch net are defined as $W_i^b \in \mathbb{R}^{n_{b,i}\times n_{b,i+1}}$ and $b^b_i \in \mathbb{R}^{n_{b,i}}$, with $i=0,\dots,l_b-1$ and $n_{t,0}=Nn_u$ and $n_{b,l_b}=n_b$. The function $\sigma$ represents an arbitrary activation function that is applied in an element-wise fashion to any tensor of arbitrary dimension. 

An overview of the variable and dimensions of all the layers in the network is provided in Table~\ref{tab:dimensions1} and  Table~\ref{tab:dimensions2} respectively. In order to find the optimal set of hyper parameters we will perform an ablation study where we perform a grid search over a set of hyper parameter configurations. In this regard, for any hyper parameter $a$ in Table~\ref{tab:dimensions1}, let $a^i$ denote the value of $a$ in the $i$-th hyper parameter configuration. Then, we denote a single hyper parameter configuration by $\Xi_i = \left\{ \left(l_{\text{b}}^i,l_{\text{t}}^i , p^i, n^i_{b,0}, \dots,n^i_{b,l^i_b}, n^i_{t,0}, \dots,n^i_{t,l^i_t} \right) \right\}$. Let $\mathbf{\Xi} := \{\Xi_i\}$, where $i = 1, \dots, N_c$ and $N_c \in \mathbb{N}$ represent the number of hyper parameter configurations. Next, let $\mathbf{B}(\mathbf{u}_k,\Xi_i,\Theta_i^t)$ denote the branch output equation in matrix form \eqref{eqn:branch_matrix} for hyper parameter configuration $\Xi_i$ and let $\Theta^b_i$ be the set of all weights and biases of the hidden layers and output layer of the branch network. Note in this case that $\Phi_b(\mathbf{u}_k) = h_{l_b^i}^b$ and $h^b_0=\mathbf{u}_k$. Similarly, let $\mathbf{t}(x_k,\Xi_i,\Theta^t_i)$ denote the trunk output equation in vector form \eqref{eqn:trunk_matrix} for hyper parameter configuration $\Xi_i$ and let $\Theta^t_i$ be the set of all weights and biases of the hidden layers and output layer of the trunk network. Note in this case that $\Phi_b(\mathbf{u}_k) = h_{l_b^i}^b$ and $h^b_0=\mathbf{u}_k$. Finally, the hyper parameter dependent prediction model is defined as:
\begin{equation}\label{eqn:output_MS_DeePONet_as}
   \bar\by_k := \mathbf{B}^\top(\bar\bu_k,\Xi_i,\Theta^b_i) \mathbf{t}(x_{0|k},\Xi_i,\Theta^t_i),
\end{equation}
\begin{table}[t!]
\caption{MS-DeepONet model architecture parameters.}
\label{tab:dimensions1}
\begin{center}
\begin{tabular}{ |c|c| }
\hline
\textbf{Symbol} & \textbf{Description} \\
\hline
$l_b$ & Number of hidden layers in branch net \\ 
$l_t$ & Number of hidden layers in trunk net \\ 
$n_{b,i}$ & Neurons in hidden layer $i$ of branch net \\ 
$n_{t,i}$ & Neurons in hidden layer $i$ of trunk net \\ 
$p$ &  Output layers dimension \\
$N$ &  Prediction horizon \\
\hline
\end{tabular}
\end{center}
\end{table}
\begin{table}[t!]
\caption{Model Architecture: Inputs and output dimensions for branch, trunk, and Product layers during training.}
 \label{tab:dimensions2}
\begin{center}
\begin{tabular}{ |c|c|c| }
\hline
& \textbf{Input dim.} & \textbf{Output dim.} \\
\hline
Branch Hidden layer $1$ & ($T$,$n_u N$) & ($T$,$n_{b,1}$) \\ 
Branch Hidden layer $i$ & ($T$,$n_{b,i-1}$) & ($T$,$n_{b,i}$) \\ 
Branch Output layers & ($T$,$n_{b,l_b}$) & ($T$,$n_y N$,$p$)\\ 
\hline
Trunk Hidden layer $1$ & ($T$,$n_x$) & ($T$,$n_{t,1}$) \\ 
Trunk Hidden layer $i$ & ($T$,$n_{t,i-1}$) & ($T$,$n_{t,i}$)\\ 
Trunk Output layer & ($T$,$n_{t,l_t}$) & ($T$,$p$)\\ 
\hline
Product layer & ($T$,$p$) $\times$ ($T$,$n_yN$,$p$) & ($T$,$n_yN$) \\ 
\hline
\end{tabular}
\end{center}
\end{table}
We perform the ablation study defined in Algorithm~\ref{alg:2} to select the optimal prediction model over all configurations in $\mathbf{\Xi}$. Firstly, the data set $\mathcal{D}$ is divided into the training subset ($\mathcal{D}^{\text{train}}$) and validation ($\mathcal{D}^{\text{val}}$) subsets. Next, we define the model learning problem for hyper parameter configuration $\Xi_i\in\mathbf{\Xi}$.
\begin{algorithm}[t!]
\caption{Ablation Study}\label{alg:2}
\begin{algorithmic}[1]
\State \textbf{Input:} 
\State \quad $\mathcal{D}$: (data set for model learning)
\State \textbf{Output:} 
\State \quad $\hat{\mathcal{G}}^{*}$: (Optimal prediction model)
\State \textbf{Step 1:} split data $\mathcal{D} = \{\mathcal{Y}, \mathcal{U}, \mathcal{Z}\}$ into training $\mathcal{D}^{\text{train}} = \{\mathcal{Y}^{\text{train}}, \mathcal{U}^{\text{train}}, \mathcal{Z}^{\text{train}}\}$ and validation $\mathcal{D}^{\text{val}} = \{\mathcal{Y}^{\text{val}}, \mathcal{U}^{\text{val}}, \mathcal{Z}^{\text{val}}\}$ data sets. 
\State \textbf{Step 2:} Train models for hyper parameter configurations:
\For{$\Xi_i \in \mathbf{\Xi}$}
    \State \quad \textbf{Step 2.1:} Solve Problem~\ref{prob:NNOpt} for $\Xi_i$ 
    \State \quad \textbf{Step 2.2:} Construct the estimate validation data:
    \State \quad \quad \quad $\hat{\mathcal{Y}}^{\text{val}}_i = \mathbf{B}^\top(\mathcal{U}^{\text{val}},\Xi_i,\Theta_i^{b^*})\mathbf{t}(\mathcal{Z}^{\text{val}},\Xi_i,\Theta_i^{t^*})$, 
    \State \quad \textbf{Step 2.2:} Calculate the validation loss:
    \State \quad \quad \quad $J^{\text{val}}_i := J^{\text{val}}(\mathcal{Y}^{\text{val}}, \hat{\mathcal{Y}}^{\text{val}}_i)$
\EndFor
\State \textbf{Step 3:} Identify index $I = \arg \min_{i} J_i^{\text{val}}$ minimizing the validation loss and choose $\bar \by_k = \mathbf{B}^{*^\top}(\bar\bu_k)\mathbf{t}^*(x_{0|k})$, where 
 \begin{align*}
     \mathbf{B}^{*}(\bar\bu_k) &:= \mathbf{B}(\bar\bu_k,\Xi_I,\Theta_I^{b^*}), \\
     \mathbf{t}^*(x_{0|k}) &:= \mathbf{t}(x_{0|k},\Xi_I,\Theta_I^{t^*}).
 \end{align*} 
\end{algorithmic}
\end{algorithm}
\begin{problem} (MS-DeepONet prediction model learning problem for $\Xi_i$.) \label{prob:NNOpt}
\begin{subequations}
\label{eq:3:1_2}
\begin{align}
& \{\Theta_i^{b^*}, \Theta_i^{t^*} \} = \arg \min_{\Theta_i^{b}, \Theta_i^{t}} \quad J(\mathcal{Y}^{\text{train}}, \hat{\mathcal{Y}}^{\text{train}}_i,\Theta) \\
& \text{subject to:} \nonumber \\
& \quad \hat{\mathcal{Y}}^{\text{train}}_i = \mathbf{B}^\top(\mathcal{U}^{\text{train}},\Xi_i,\Theta_i^b)\mathbf{t}(\mathcal{Z}^{\text{train}},\Xi_i,\Theta_i^t),
\end{align}
\end{subequations}
\end{problem}
where $\Theta_i = \Theta_i^t\cup \Theta^b_i$ collects all the weights and biases of the MS-DeepONet model for hyper parameter configuration $\Xi_i$. The problem above will be solved $N_c$ times for all hyper parameter configurations within $\mathbf{\Xi}$. As a cost function we typically use the the regularized least squares cost function:
\begin{equation}\label{eqn:loss}
    J(\mathcal{Y}, \hat{\mathcal{Y}}, \Theta) = \frac{\| \mathcal{Y} - \hat{\mathcal{Y}}\|_2^2}{\|\mathcal{Y} \|_2^2}+ \lambda\|\Theta\|_2^2.
\end{equation}
\begin{remark}
    The regularization term $\lambda \| \Theta \|_2^2$ in \eqref{eqn:loss} is used to prevent overfitting. The parameter $\lambda>0$ is typically referred to as the weight decay.  
\end{remark}
In order to select the prediction model over all hyper parameter configurations, we firstly calculate the validation output data, i.e.,
\begin{equation}\label{eqn:validation}
\hat{\mathcal{Y}}^{\text{val}}_i = \mathbf{B}^{*^\top}(\mathcal{U}^{\text{val}},\Xi_i,\Theta_i^b)\mathbf{t}^*(\mathcal{Z}^{\text{val}},\Xi_i,\Theta_i^t),
\end{equation}
and next we check the validation loss, i.e.,
\begin{equation}
    J^{\text{val}}(\mathcal{Y}^{\text{val}}, \hat{\mathcal{Y}}^{\text{val}}_i) = \frac{\| \mathcal{Y}^{\text{val}} - \hat{\mathcal{Y}}^{\text{val}}_i\|_2^2}{\|\mathcal{Y}^{\text{val}} \|_2^2}.
\end{equation}
 The algorithm~\ref{alg:2} summarizes the ablation study to select the optimal prediction model prediction model $\bar\by_k = \mathbf{B}^{*^\top}(\bar\bu_k)\mathbf{t}^*(x_{0|k})$. Next, we show how to incorporate the model into a predictive control scheme to actively control the dynamical system \eqref{eqn:systems}.

\section{NEURAL OPERATOR PREDICTIVE CONTROL}
\label{sec6}
Model predictive control is an optimization-based strategy that computes a control input sequence $\bar \bu_k = \col(u_{0|k}, \dots, u_{N-1|k})$ by solving a finite-horizon constrained optimal control problem at each time step $t = kT_s$, where $k = 0, 1, 2, \dots$, using the current measured output (or state) $y(kT_s)$ of a dynamical system. The first input $u(kT_s) = u_{0|k}$ in the sequence is applied to the system, typically by means of Zero-Order-Hold, as shown in \figurename~\ref{fig:neural_pc_online}. This section introduces several implementations based on the both the standard DeepONet and the MS-DeepONet architecture. Additionally, we propose a data-enabled predictive controller for the MS-DeepONet, utilizing the candidate basis derived in subsection~III.\ref{subsec:basis}. 
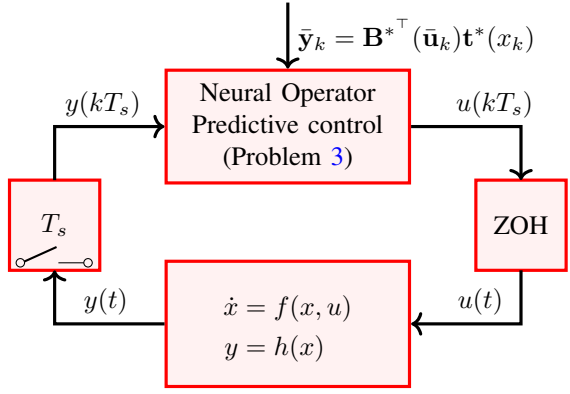
\begin{figure}[t!]
        \centering
        \begin{center}
\begin{tikzpicture}[
Block1/.style={draw=blue, fill = blue!5, very thick,  minimum width=2cm, minimum height=1.5cm},
Block2/.style={draw=red, fill = red!5, very thick,  minimum width=2cm, minimum height=1.5cm},
Block3/.style={draw=red, fill = red!5, very thick,  minimum width=1.2cm, minimum height=1.2cm},
Block4/.style={draw=black, fill = black!5, very thick,  minimum width=1.5cm, minimum height=1.5cm},
node distance=10mm  
]
       
\node[Block2] (block3) {\parbox{3cm}{\centering Neural Operator Predictive control \\ (Problem~\ref{prob:MS_DeepONet})}};
\node[Block2] (block4) [below=of block3] {\parbox{3cm}{\centering \begin{align*}
    \dot x &= f(x,u) \\
    y &= h(x)
\end{align*}}};

\node[Block3] (block1_small) [right=of $(block3)!0.5!(block4)$, xshift=15mm, inner sep=0pt, outer sep=0pt] {ZOH};
 
\node[Block3, inner sep=0pt, outer sep=0pt, minimum width=1.2cm, minimum height=1.2cm] (block2_small) [left=of $(block3)!0.5!(block4)$, xshift=-15mm] { 
   \begin{circuitikz}
     \draw (0.1,0.2) to[nos, o-o] (1.0,0.2);  
     \node[above=0.0cm] at (0.5, 0.1) {$T_s$};  
   \end{circuitikz}
};

 \draw[->, very thick] (block4.west) -| (block2_small.south) node[midway, right, xshift=2.5mm, yshift=3mm] {$y(t)$};

 \draw[->, very thick] (block2_small.north) |- (block3.west) node[midway, right, xshift=0mm, yshift=3mm] {$y(kT_s)$};

\draw[->, very thick] (block3.east) -| (block1_small.north) node[midway, right, xshift=-10.0mm, yshift = 3mm] {$u(kT_s)$};

\draw[->, very thick] (block1_small.south) |- (block4.east) node[midway, left, xshift= -1.0mm, yshift = 3.0mm] {$u(t)$};

\draw[->, very thick] 
  ([yshift=25pt]block3.north) -- (block3.north) 
  node[midway, right] 
  {$\bar \by_k = \mathbf{B}^{*^\top}(\bar\bu_k)\mathbf{t}^*(x_k)$};

\end{tikzpicture}
\end{center}
        \caption{Neural Operator Predictive Control.}      
        \label{fig:neural_pc_online}
\end{figure}

\subsection{Multi-step DeepONet MPC problem formulation}\label{lst:yalmip_ssDeepONet}
Next we present the predictive control problem for the MS-DeepONet formulation which will be solved online for all $t=kT_s$, wher $k\in\mathbb{N}$.
\begin{problem} (MS-DeepONet MPC Problem at time $t=kT_s$) \label{prob:MS_DeepONet}
\begin{subequations}
\label{eq:3:1_3}
\begin{align}
&\min_{\bar\bu_k}  \quad   (\bar\by_k-\mathbf{r})^{\top}\Omega(\bar\by_k-\mathbf{r}) + \Delta \bar\bu^\top_{k}\Psi \Delta \bar\bu_{k}  \label{eq:DMPCp_a2} \\ 
&\text{subject to: } \nonumber  \\
& \quad x_{0|k} =  x(kT_s), \\
& \quad \bar \by_k = \mathbf{B}^{*^\top}(\bar\bu_k)\mathbf{t}^*(x_{0|k}), \label{eq:DMPC_MS_b} \\
&\quad (\bar\by_k , \bar\bu_k)  \in \Yset^{Nn_y} \times \Uset^{Nn_u}. \label{eq:DMPC_MS_c}
\end{align}
\end{subequations}
\end{problem}
We define a quadratic cost function as follows:
\begin{equation}\label{eqn:quadratic_cost}
    \Omega = \begin{bmatrix}
        Q & & & \\
         & \ddots &  & \\
          &  & Q & \\
        &  &   & P\\
    \end{bmatrix}, \quad \Psi = \begin{bmatrix}
        R & &  \\
         & \ddots &   \\
          &  & R  
    \end{bmatrix},
\end{equation}
where $Q\succ0$, $P\succ0$ and $R\succ0$ are strictly positive definite matrices. In the problem above, we penalize the vector \(\Delta \mathbf{u}_k := \col(\Delta u_{0|k}, \dots, \Delta u_{N-1|k})\), where \(\Delta u_{i|k} := u_{i|k} - u_{i-1|k}\) with \(u_{-1|k} = u((k-1)T_s)\) the applied input from the previous sampling instance. This approach enables us to stabilize the system at various equilibria without the need for computing a reference signal for the input. A popular choice for the constraint sets $\mathbb{Y}$ and $\mathbb{U}$ are:
\begin{align}
    \Yset &= \{y_{i|k} \mid \mathbf{A}_y y_{i|k} \leq \mathbf{b}_y\}, \quad \forall i=1,\dots,N,\\
    \Uset &= \{u_{i|k} \mid \mathbf{A}_u u_{i|k} \leq \mathbf{b}_u\}, \quad \forall i=0,\dots,N-1.
\end{align}
It is worth pointing out that the MS-DeepONet basis representation derived in Remark~\ref{rem:alt:base} can be used in \eqref{eq:DMPC_MS_b}, that is, \[\bar\by_k=\Theta_o\Phi_{\otimes}(\bar\bu_k,x_{0|k})=\mathbf{\Theta}_o(x_{0|k}) \begin{bmatrix}
          \Phi_b(\bar\bu_k) \\
            1
        \end{bmatrix}.\]
\begin{remark}
    For the problem formulation, we assume that the state $x(kT_s)$ is known, at each sampling instant. A practical alternative is to use shifted inputs and outputs as the state, i.e., $\col(u(k-1)T_s,\dots,\col(u(k-T_{\text{ini}})T_s), y(k)T_s,\dots,\col(u(1+k-T_{\text{ini}})T_s))$ for some $T_{\text{ini}}\in\mathbb{N}$. In this case, the data matrix $\mathcal{Z}$ must be adapted accordingly during the model learning phase. 
\end{remark}

\begin{remark}
With the extracted candidate basis \eqref{eqn:output_basis} it is possible to use the MS-DeepONet in a data-enabled predictive control scheme \cite{Coulson2019, Neural_Basis_Lazar}. To do this, replace \eqref{eq:DMPC_MS_b} in Problem~\ref{prob:MS_DeepONet} by:
\begin{equation*}
    \begin{bmatrix}
    \boldsymbol{\Phi}_{\otimes} \\
    \mathbf{\mathcal{Y}}
\end{bmatrix} \bg_k = 
\begin{bmatrix}
    \Phi_{\otimes}(\bar\bu_k,x_{0|k}) \\
    \bar\by_k
\end{bmatrix}.
\end{equation*}
Here, $\bg_k \in \mathbb{R}^T$ is a vector of optimization variables that allows for some additional freedom when predicting the future output via the MS-DeepONet, compared to the uniquely defined MS-DeepONet predictor in \eqref{eq:DMPC_MS_b}, at the cost of an increase in computational complexity due to the additional variables. Typically, a regularization term  $l_g(g_k) := \lambda \| \bg_k \|_2^2$ is added to the cost, where  $\lambda > 0$. Moreover, $\boldsymbol{\Phi}_{\otimes} := \Phi^*_{\otimes}(\mathcal{U},\mathcal{Z})\in \mathbb{R}^{n_tn_b + n_t + n_b +1\times T}$ corresponds to the data matrix generated by feeding the columns of $\mathcal{U}$ and $\mathcal{Z}$ through the learned MS-DeepONet basis. The online implementation of the corresponding problem is more challenging due to the data-dependent size of $\boldsymbol{\Phi}_{\otimes}$, $\mathbf{\mathcal{Y}}$ and $\mathbf{g}_k$. Further analysis and implementation of MS-DeepONet data-enabled predictive control is beyond the scope of this paper and will be considered in future work.
\end{remark}
For completeness, we also provide the MPC problem formulation and model learning procedure for the MimoONet in the next subsection.  

\subsection{Standard DeepONet model learning approach and MPC problem formulation}\label{seubsec_5B_DeepONet}
The data generation process for the standard unstacked DeepONet is similar to the MS-DeepONet data generation. In Algorithm~\ref{alg:1}, simply replace the data matrices by:
\begin{align*}
    &\mathcal{Y} = \begin{bmatrix}
        y(T_s) \ldots y(NT_s) & y(2T_s) \dots  y((N+1)T_s) & \dots 
    \end{bmatrix}, \\
    &\mathcal{Z} = \begin{bmatrix}
        x(0)  \dots x(0) & x(T_s)  \dots  x(T_s) & \dots  \\
        T_s  \dots  NT_s & T_s  \dots  NT_s  & \dots 
    \end{bmatrix}, \\
    &\mathcal{U}_j = \begin{bmatrix}
        \mathbf{\bar u}^j_0 \dots \mathbf{\bar u}^j_0 & \mathbf{\bar u}^j_{T_s}  \dots  \mathbf{\bar u}^j_{T_s}  & \dots 
    \end{bmatrix},
\end{align*}
where $\mathbf{\bar u}^j_i = \col(u_j(iT_s),\dots,u_j((i+N-1)T_s))$ for all $j=1,\dots,n_u$. Recall that the standard DeepONet is given by \eqref{eqn:MimoOnet_pm}. An ablation study can be performed similarly as for the MS-DeepONet as explained in Algorithm~\ref{alg:2}. Next we present the predictive control problem for the standard DeepONet formulation which will be solved online for all $t=kT_s$, where $k\in\mathbb{N}$.
\begin{problem} (Standard DeepONet MPC Problem at time $t=kT_s$) \label{prob:DMPC_DeePONet_standard}
\begin{subequations}
\label{eq:3:1_4}
\begin{align}
&\min_{\bar\bu_k}  \quad   (\bar\by_k-\mathbf{r})^{\top}\Omega(\bar\by_k-\mathbf{r}) + \Delta \bar\bu^\top_{k}\Psi \Delta \bar\bu_{k}  \label{eq:DMPCp_a3} \\ 
&\text{subject to: } \nonumber  \\
& \quad x_{0|k} =  x(kT_s), \\
& \quad y_{j|k} = \sum_{l=1}^{p} \mathbf{b}^{n_u}_{l,i}(\bar\bu_k)
   t_l(x_{0|k},jT_s), \quad j = 1,\dots, N, \label{eq:DMPCp_b2} \\
& \quad (\bar \by_k ,\bar \bu_k)  \in \Yset^{Nn_y} \times \Uset^{Nn_u}. \label{eq:DMPCp_c1}
\end{align}
\end{subequations}
\end{problem}
In the next section we analyze the performance Problem~\ref{prob:MS_DeepONet} and Problem~\ref{prob:DMPC_DeePONet_standard} by means of tree nonlinear benchmark examples. 

\section{NUMERICAL EXAMPLES}
\label{sec:examples}
We evaluate the performance of the proposed MPC schemes using the MS-DeepONet and standard DeepONet (Problems~\ref{prob:MS_DeepONet} and~\ref{prob:DMPC_DeePONet_standard}). To ensure a fair comparison, both use identical datasets, MPC costs, prediction horizons, and comparable network dimensions. The code for all simulations is available on \href{https://github.com/todejong/Deep-Operator-Neural-Network-Model-Predictive-Control.git}{GitHub}. We first assess tracking performance on the nonlinear SISO van der Pol oscillator, then evaluate the nonlinear MIMO quadruple tank process, and finally address the cart-pendulum swing-up task, which requires closed-loop data due to its unstable upright equilibrium. An ablation study on the van der Pol example investigates the influence of key hyper parameters. Table~\ref{tab:performance_all} summarizes network configurations, MPC performance (average computation time and absolute mean error), and training statistics (training time, training loss, and validation loss). All continuous-time systems are simulated using the \texttt{solve\_ivp} function from \texttt{SciPy} with an explicit Runge--Kutta method of order $5(4)$. MPC problems are solved using \texttt{CasADi} with the \texttt{IPOPT} solver. All results presented in this section were generated using a 13\textsuperscript{th} Gen Intel\textsuperscript{\textregistered} Core\textsuperscript{TM} i7-1370P CPU running at 1.90~GHz. No GPU acceleration has been used to enhance performance. 
\begin{figure}[b!]
  \centering
\includegraphics[width=\columnwidth]{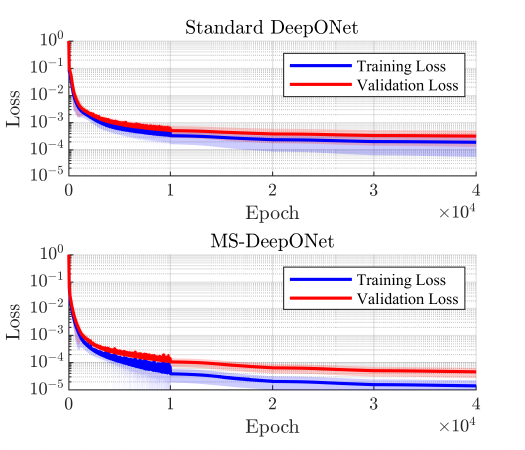}\caption{Training and validation loss and standard deviation for MS-DeepONet learning over the ablation study for van der Pol oscillator.}
  \label{fig:train_val_vdp}
\end{figure}

\paragraph{Van der Pol oscillator}
We asses the effectiveness of the MS-DeepONet and standard DeepONet (Problems~\ref{prob:MS_DeepONet} and~\ref{prob:DMPC_DeePONet_standard}) by using the van der Pol oscillator with control input, i.e.,
\begin{subequations}
\label{eqn:vanderpol}
\begin{align}
    \dot{x}_1 &= x_2, \\
    \dot{x}_2 &= \mu(1-x_1^2)x_2 + u, \\
    y &= x_1,
\end{align}
\end{subequations}
with $\mu=1$. An open-loop identification experiment is performed using a piecewise constant input signal with a sampling time of $T_s=0.1$s with $2000$ samples. The datasets are generated as explained in Algorithm~\ref{alg:1} and a total of $40000$ epochs is used for training the models. 

We perform an ablation study in order to guarantee optimality of the hyper parameters, see Algorithm~\ref{alg:2}. This amounts to performing a grid search on a set of hyper parameter configurations: $\mathbf{\Xi} = \left\{ (l_{\text{b}},l_{\text{t}} , p, n_{b,1},\dots,n_{b,l_b}, n_{t,1},\dots,n_{t,l_t},) \right\} $ where $l_{\text{b}} = l_{\text{t}} \in \{1, 2, 3\}$, $p \in \{20, 30, 40\}$ and $n_{b,i} = n_{t,i} \in \{20, 30, 40\}$ for $i = 1, \dots, l$, where $l = l_b = l_t$. The total number of combinations is $|\mathbf{\Xi}| = 27$. The mean training loss and the mean validation loss for all hyper parameter configurations during training are shown in \figurename~\ref{fig:train_val_vdp}. The shaded area around the mean is the standard deviation over the $27$ combinations. Clearly, the MS-DeepONet architecture achieves a lower training and validation loss over all hyper parameter configurations. The optimal set of hyper parameters based on the ablation study, is shown in Table~\ref{tab:performance_all}. These hyper parameters are used to generate the simulation results that are discussed next.
\begin{table*}[t!]
\centering
\caption{
Comparison of MS-DeepONet and Standard DeepONet across three systems: van der Pol, 4-Tank, and Pendulum-on-a-Cart. The table includes (i) model architecture details, (ii) MPC controller performance, and (iii) training metrics.
}
\begin{tabular}{|c||c|c|c|c|c|}
 \hline
 \multirow{2}{*}{\textbf{ }} &
      \multicolumn{2}{c|}{\textbf{van der Pol}} &
      \multicolumn{2}{c|}{\textbf{Quadruple Tank Process}} &
      \textbf{Pendulum on a cart} \\
      & DeepONet & MS-DeepONet & DeepONet & MS-DeepONet & MS-DeepONet \\
 \hline\hline
Neurons in output layers $p$ & $20$ & $20$ & $20$ & $20$ & $40$ \\ 
 \hline
Neurons per layers $n_{t,i}$ and $n_{b,i}$ & $\{40,40,40\}$ & $\{40,40,40\}$ & $\{20,20\}$ & $\{20,20\}$ & $\{128,256,128\}$ \\ 
 \hline
Hidden layers $l_{t}$ and $l_b$ & $3$ & $3$ & $2$ & $2$ & $3$ \\ 
 \hline
Prediction horizon $N$ & $10$ & $10$ & $20$ & $20$ & $40$ \\ 
\noalign{\hrule height 1.5pt} 
AME & $0.1856$ & \textcolor{green_dark}{$0.0958$} & $0.0314$ & \textcolor{green_dark}{$0.0156$} & $0.1529$ \\ 
 \hline
Computation time & $0.0087$s & \textcolor{green_dark}{$0.0078$s} & $0.1678$s & \textcolor{green_dark}{$0.1631$s} & $3.8718$s  \\ 
\noalign{\hrule height 1.5pt} 
Training time & $55$min & \textcolor{green_dark}{$14$min} & $164$min & \textcolor{green_dark}{$40$min} & $195$min \\ 
 \hline
Training loss & $3.9094\cdot10^{-5}$ & \textcolor{green_dark}{$5.3679\cdot10^{-6}$} & $2.9620\cdot10^{-5}$ & \textcolor{green_dark}{$9.5186\cdot10^{-6}$} & $0.0108$ \\ 
 \hline
Validation loss & $9.4211\cdot10^{-5}$ & \textcolor{green_dark}{$2.7489\cdot10^{-5}$} & $9.0668\cdot10^{-5}$ & \textcolor{green_dark}{$2.1338\cdot10^{-5}$} & $0.0127$ \\ 
 \hline
\end{tabular}
\label{tab:performance_all}
\end{table*}

\begin{figure}[b!]
  \centering
\includegraphics[width=0.95\columnwidth]{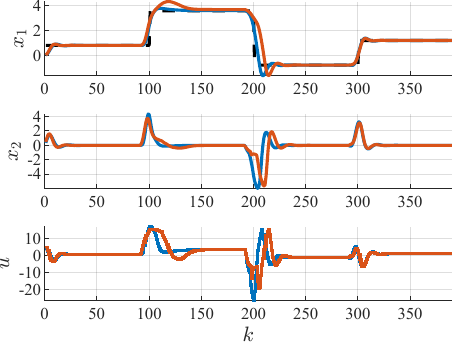}
\caption{Reference tracking for MS-DeepONet MPC (\textcolor{Matlab_blue}{--}) vs standard DeepONet MPC (\textcolor{Matlab_orange}{--}) for van der Pol oscillator.}
  \label{fig:tracking2}
\end{figure}

Both MPC schemes (Problems~\ref{prob:MS_DeepONet} and~\ref{prob:DMPC_DeePONet_standard}) are implemented with a prediction horizon $N=10$. $Q = 100$, $R=1$. Consider the closed--loop response for tracking a piecewise constant reference signal in \figurename~\ref{fig:tracking2}. In general, both MPC controllers are able to track the reference signal well for magnitudes smaller that $2$ for $x_1$. This is expected since $x_1$ does not become much larger than $2$ during training. Remarkably, for a reference magnitude of $4$, it seems that both controllers are still able to stabilize the system, indicating that the learned models generalize well outside of the data set. It seems that the MS-DeepONet based controller has an overall better tracking performance than the standard DeepONet based controller. This is confirmed by the absolute mean tracking error (AME) shown in Table~\ref{tab:performance_all}. Note also that both the training and validation loss of the MS-DeepONet architecture are lower. Notably, the average computation time per control action for both controllers is much shorter ($0.0087$s vs. $0.0078$s) than the sampling period $T_s = 0.1$s, highlighting the architecture's potential for real-time MPC implementation. Lastly, note that it takes significantly longer to train the standard DeepONet model than the MS-DeepONet model ($55$min vs. $14$min) for the exact same data size and number of epochs. Overall, MS-DeepONet consistently outperforms the standard DeepONet across all metrics: it achieves better validation and control performance, slightly lower average computation time, and significantly faster training.

\paragraph{Quadruple Tank Process}
As a second example we consider the quadruple tank process benchmark example from \cite{four_tank_2022}, i.e.
\begin{align}\label{eqn:4_tank_sys}
    \dot x_1  &= \frac{-a_1}{S_c}\sqrt{2 g x_1} + \frac{a_3}{S_c}\sqrt{2 g x_3} + \frac{\gamma_a}{3600 S_c} u_1, \\
    \dot x_2  &= \frac{-a_2}{S_c}\sqrt{2 g x_2} + \frac{a_4}{S_c}\sqrt{2 g x_4} + \frac{\gamma_b}{3600 S_c} u_2, \\
    \dot x_3 &= \frac{-a_3}{S_c}\sqrt{2 g x_3} + \frac{(1-\gamma_b)}{3600S_c} u_2, \\
    \dot x_4 &= \frac{-a_4}{S_c}\sqrt{2 g x_4} + \frac{(1-\gamma_a)}{3600S_c} u_1.
\end{align}
Let $\col(x_1, x_2, x_3, x_4) = \col(h_1, h_2, h_3, h_4)$ represent the water levels in the four tanks, respectively. The tank cross-sectional area is given by $S_c = 0.06\,\text{m}^2$. The leakage orifice areas are $a_1 = 1.31 \times 10^{-4}\,\text{m}^2$, $a_2 = 1.51 \times 10^{-4}\,\text{m}^2$, $a_3 = 9.27 \times 10^{-5}\,\text{m}^2$, and $a_4 = 8.82 \times 10^{-5}\,\text{m}^2$. The three-way valve opening ratios are given by $\gamma_a = 0.3$ and $\gamma_b = 0.4$ and $g=9.81$m/s$^2$ is the gravitational acceleration. We assume that all water tank levels are measured, i.e. $y = \col(x_1, x_2, x_3, x_4)$. Note that the system has $n_y=4$ outputs and $n_u=2$. Hence, the quadruple tank process is a MIMO system and for this reason we will compare the performance of the MS-DeepONet with the MimoONet. An open-loop identification experiment is performed using a piecewise constant input signal with a sampling time of $T_s=5$s and $8000$ samples. The datasets are generated as explained in Algorithm~\ref{alg:1}. A total of $40000$ epochs is used for training the models. 
 
 Both MPC schemes (Problems~\ref{prob:MS_DeepONet} and~\ref{prob:DMPC_DeePONet_standard}) are implemented with a prediction horizon $N=20$, loss weights $Q = 100I_{n_x}$, $R=I_{n_u}$ with input constraints $\mathbb{U}=\{u\in\mathbb{R}^2 | \col(0,0) \leq u \leq \col(4,4)\}$. Consider the closed--loop response for tracking a piecewise constant reference signal in \figurename~\ref{fig:control_4tank}. In general, both MPC controllers are able to track the reference signal well. However, it seems that the MS-DeepONet based controller has a overall better tracking performance than the standard DeepONet based controller which is confirmed by the absolute mean tracking error shown in Table~\ref{tab:performance_all}. This is to be expected since both the training and validation loss of the MS-DeepONet architecture are lower. Notably, the average computation time per control action for both controllers ($0.1678$s vs. $0.1631$s) is much shorter than the sampling period $T_s = 5$s, highlighting the architecture's potential for real-time MPC implementation. It is worth noting that the MS-DeepONet-based controller achieved performance comparable to that of the model-based MPC controller. These results are omitted from the paper due to space limitations. Lastly, note that it takes significantly longer to train the standard DeepONet model than the MS-DeepONet model ($164$min vs. $40$min) for the exact same data size and number of epochs. This simulation confirms the findings from the previous example: MS-DeepONet consistently outperforms the standard DeepONet. 
\begin{figure}[!t]
    \centering
    \includegraphics[width=0.95\columnwidth]{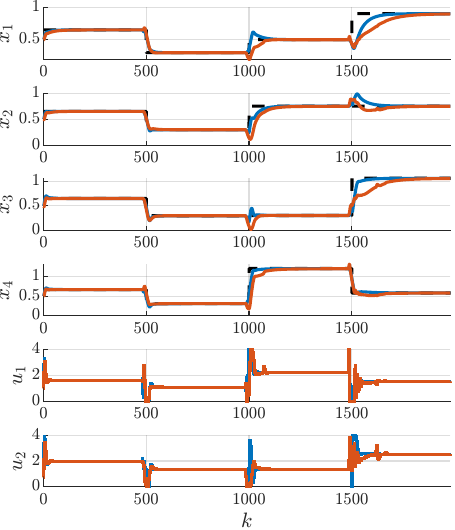}
    \caption{Reference tracking for MS-DeepONet MPC (\textcolor{Matlab_blue}{--}) vs standard DeepONet MPC (\textcolor{Matlab_orange}{--}) for Quadruple tank process.}
    \label{fig:control_4tank}
\end{figure}

\paragraph{Pendulum on a cart}
As a third example we consider the pendulum on a cart system from \cite{chatterjee2002swing}, i.e.
\begin{align}\label{eqn:pendulum_on_cart}
    \dot x_1  &= x_3, \\
    \dot x_2  &= x_4, \\
    \dot x_3 &= \frac{\frac{u}{M+m} - \frac{3 m g \sin(x_2)\cos(x_2)}{4(M+m)} + \frac{mlx_4^2\sin(x_2)}{M+m}}{1-\frac{3m\cos^2(x_2)}{4(M+m)}}, \\
    \dot x_4 &= \frac{\frac{3g\sin(x_2)}{4l} - \frac{3u\cos(x_2)}{4l(M+m)} - \frac{3mx_4^2\sin(x_2)\cos(x_2)}{4(M+m)}}{1-\frac{3m\cos^2(x_2)}{4(M+m)}},
\end{align}
where $\col(x_1,x_2,x_3,x_4) = \col(x,\theta,\dot x,\dot\theta)$ with cart position $x$ and pendulum angle $\theta$. The model parameters are $M=2.4$kg, $m=0.23$kg, $l = 0.18$m, $g=9.81$ m/s$^2$. As a control objective, we stabilize the pendulum in the upward position $\theta=0$ starting from the downward position $\theta=\pi$. Due to the unstable nature of the equilibrium point $\theta=0$ we collect closed-loop data from successful swing-up trajectories. To generate the data, we use the energy-based controller from \cite{chatterjee2002swing}. In total, we use $1100$ swing-up trajectories of length $100$. Each trajectory is transformed into Hankel matrices as explained in Algorithm~\ref{alg:1} and one large data set is generated by concatenating the data matrices from all the swing up trajectories. 
\begin{figure}[!t]
    \centering
    \includegraphics[width=0.95\columnwidth]{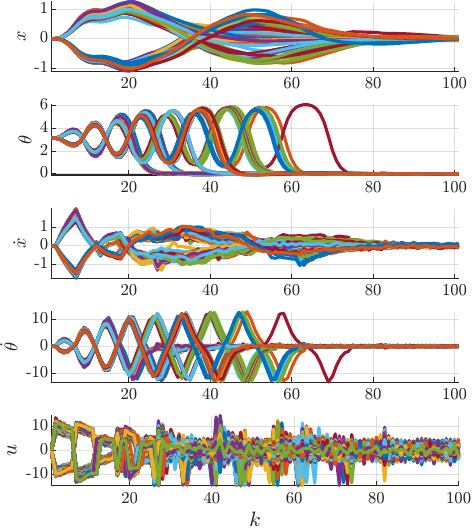}
    \caption{Randomly selected $100$ swing up trajectories from the $1000$ size dataset used for training.}
    \label{fig:swingup_data}
\end{figure}
In addition, we add a disturbance to the input such that the system is sufficiently excited, similar to the approach in \cite{näf2025choosewiselydataenabledpredictive}. Furthermore, the measurement $y \in \mathbb{R}^5$ consists of the position of the cart $x$, $\cos(\theta)$, $\sin(\theta)$, as well as the linear velocity $\dot x$ and the angular velocity $\dot \theta$ of the cart and pendulum, respectively. Some of the swing up trajectories are shown in \figurename~\ref{fig:swingup_data}. 
\begin{figure}[!t]
    \centering
    \includegraphics[width=0.95\columnwidth]{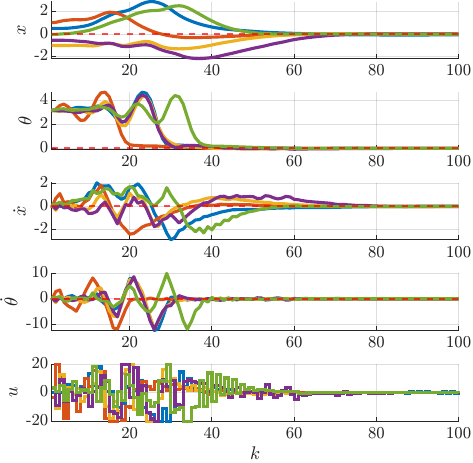}
    \caption{Swing up trajectory for pendulum on a cart for several initial conditions.}
    \label{fig:simulation}
\end{figure}
We use the MS-DeepONet to learn a prediction model. The branch and trunk networks both have $3$ hidden layers of dimensions $128$, $256$ and $128$ respectively. To control the system, we use the MPC controller defined by problem~\ref{prob:MS_DeepONet}. As a reference signal, we use $\mathbf{r} = \col(\col(0,1,0,0,0),\dots,\col(0,1,0,0,0))\in\mathbb{R}^{5N}$ and as cost weighting matrices, we use $Q = \diag(1, 1000, 1, 1, 1)$ and $R=0.01$ with a prediction horizon $N=40$, with input constraints $\mathbb{U}=\{u\in\mathbb{R} | -20 \leq u \leq 20\}$ and a sampling time of $T_s=0.1$.

From Table~\ref{tab:performance_all}, it appears that DeepONet exhibits significantly higher training and validation losses compared to the other two examples. This is expected, as the underlying dynamics in this case are considerably more nonlinear and complex. Additionally, it should be noted that due to time constraints, no extensive effort was made to optimize either the network architecture or the dataset. The dataset used in this case is also substantially larger, resulting in prolonged training times. These aspects are left for future research and optimization. Despite these limitations, \figurename~\ref{fig:simulation} demonstrates that the MS-DeepONet MPC controller is still capable of successfully swinging up and stabilizing the pendulum in the upright position from a variety of initial conditions. Importantly, these initial conditions are not present in the training set, where all swing-up trajectories originate from $x = 0$.

\section{CONCLUSIONS}\label{sec:conclusions}
In this work, we proposed the MS-DeepONet, a novel DeepONet architecture designed for approximating multi-step MIMO operators, i.e., the solution operators of continuous-time MIMO nonlinear dynamical systems. In contrast to conventional DeepONet models, the MS-DeepONet eliminates the need for separate branch networks for each input signal, while enabling the prediction of multiple future time steps across multiple outputs in a single network evaluation. We proved that the MS-DeepONet is a universal approximator for continuous-time systems with piecewise constant inputs and derived a candidate underlying basis for its architecture. This basis perspective builds on existing interpretations of neural networks, where the final hidden layer acts as an implicit adaptive basis. It highlights both structural and computational advantages of the MS-DeepONet over standard feedforward architectures, particularly in the context of predictive control. This interpretation also guides the design of data-enabled predictive controllers using MS-DeepONet models.

We evaluated the performance of the proposed MS-DeepONet-based MPC controller in three numerical benchmark examples and compared it with a standard unstacked DeepONet MPC controller. Both controllers achieved high performance with realistic computation times per control action ($0.1678$s vs. $0.1631$s for the quadruple tank process and $0.0087$s vs. $0.0078$s for the van der Pol oscillator). In terms of accuracy and efficiency, MS-DeepONet consistently outperformed the standard DeepONet, achieving lower training and validation losses, better tracking performance, and significantly shorter training times for both examples. Finally, we demonstrated the ability of the MS-DeepONet to learn and execute a swing-up trajectory for a pendulum-on-cart system, validating its potential in complex nonlinear control tasks.

Future work will explore physics-informed extensions of MS-DeepONets to enhance accuracy and generalization. We also aim to develop computationally efficient, data-enabled predictive control frameworks that take advantage of the underlying basis of the MS-DeepONet.

\bibliographystyle{IEEEtran}    
\bibliography{Neural_Operator_PC}

\begin{IEEEbiography}[{\includegraphics[width=\linewidth,height=1.25in,clip,keepaspectratio,trim=0pt 0pt 0pt 0pt]{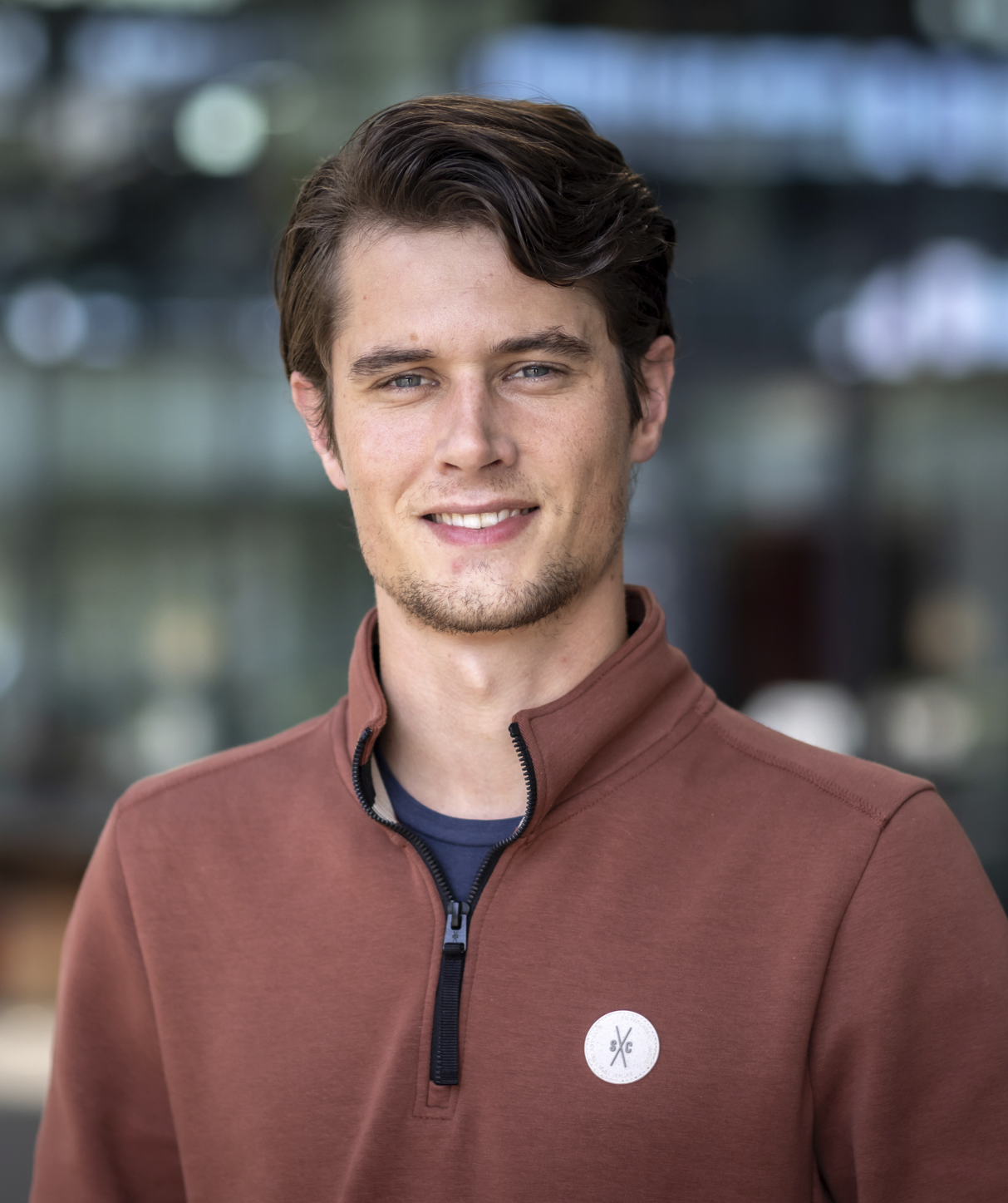}}]{Thomas O. de Jong}{\space}(Student Member, IEEE) 
received the B.S. degree in Mechanical Engineering from Eindhoven university of Technology, Eindhoven, The Netherlands, in 2021 and the M.S. degree in Systems and Control from Eindhoven university of Technology, Eindhoven, The Netherlands, in 2023. He is currently pursuing the Ph.D. degree in Systems and Control in Eindhoven, The Netherlands, in 2021 and the M.S. degree in Systems and Control at Eindhoven University of Technology, Eindhoven, The Netherlands.
\end{IEEEbiography}

\begin{IEEEbiography}
[{\includegraphics[width=\linewidth,height=1.25in,clip,keepaspectratio,trim=500pt 900pt 300pt 200pt]{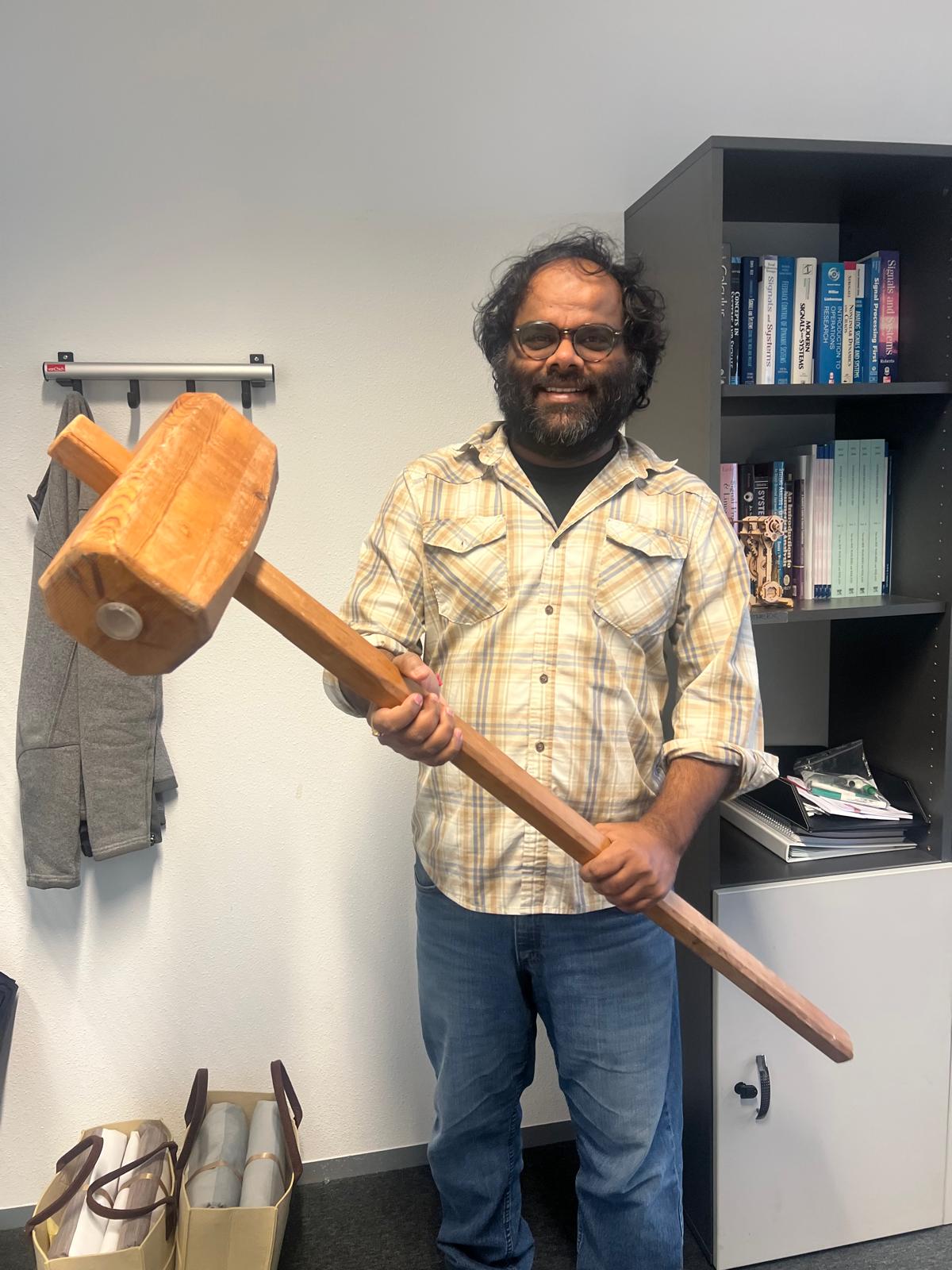}}]{Dr. Khemraj Shukla}{\space} received his Ph.D. degree in computational geophysics. In his Ph.D., he studied high-order numerical methods for hyperbolic systems and finished his research works with GMIG Group of Rice University. He is an associate professor (research) in the Division of Applied Mathematics at Brown University, Providence, Rhode Island, 02906, USA. His research focuses on the development of scalable codes on heterogeneous computing architectures. 
\end{IEEEbiography}

\begin{IEEEbiography}[{\includegraphics[width=\linewidth,height=1.25in,clip,keepaspectratio,trim=0pt 0pt 0pt 0pt]{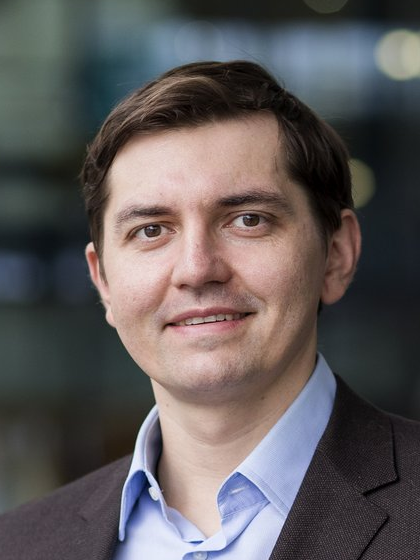}}]{Dr. Mircea Lazar}{\space}(Senior Member, IEEE)  is an Associate Professor in Constrained Control of Complex Systems at the Electrical Engineering Department, Eindhoven University of Technology, The Netherlands. His research interests cover Lyapunov functions, distributed control, neural networks and constrained control of nonlinear and hybrid systems, including model predictive control. His research is driven by control problems in power systems, power electronics, high-precision mechatronics, automotive and biological systems. Dr. Lazar received the European Embedded Control Institute Ph.D. Award in 2007 for his Ph.D. dissertation. He received a VENI personal grant from the Dutch Research Council (NWO) in 2008 and he supervised 13 Ph.D. researchers (2 received the Cum Laude distinction). Dr. Lazar chaired the 4th IFAC Conference on Nonlinear Model Predictive Control in 2012. He is an Active Member of the IFAC Technical Committees 1.3 Discrete Event and Hybrid Systems, 2.3 Nonlinear Control Systems and an Associate Editor of IEEE Transactions on Automatic Control.
\end{IEEEbiography}

\section*{APPENDIX}
In this appendix, we present additional results that are used throughout the paper for completeness. We begin by reviewing Theorem 3 from \cite{Chen_ONN}, which serves as a foundational result for establishing the adapted universal approximation Theorem~\ref{theorem:universal_approx_2}.

\begin{theorem}[Theorem 3, \cite{Chen_ONN}]\label{theorem_2}
    Suppose $K$ is a compact set in $\mathbb{R}^n$, $U$ is a compact set in $C(k)$, $g\in(TW)$, then for any $\epsilon>0$, there exist a positive integer $N$, real numbers $\theta_i$, vectors $\omega_i\in\mathbb{R}^n$, $i=1,\dots,N$, which are independent of $f\in C(K)$ and constants $c_i(f)$, $i=1,\dots,N$ depending on $f$ such that
    \begin{equation}
        |f(x) - \sum_{i=1}^Nc_i(f) g(\omega_i\cdot x + \theta_i)|<\epsilon
    \end{equation}
    holds for all $x\in K$ and $f\in U$. Moreover, each $ c_i(f)$ is a linear continuous functional defined on $U$.
\end{theorem}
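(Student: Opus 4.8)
The plan is to prove the statement in three stages, with the crux being to arrange the $f$-dependence of the coefficients so that it is manifestly linear and continuous while the nodes $(\omega_i,\theta_i)$ remain universal (independent of $f$).

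\textbf{Step 1 (Compactness $\Rightarrow$ equicontinuity).} Since $U$ is compact in $C(K)$ and $K\subset\mathbb{R}^n$ is a compact metric space, the Arzelà--Ascoli theorem gives that $U$ is uniformly bounded, say $\sup_{f\in U}\|f\|_{C(K)}\le M$, and equicontinuous. Fixing $\epsilon>0$, equicontinuity yields a $\delta>0$ such that $|f(x)-f(x')|<\epsilon/2$ for every $f\in U$ whenever $x,x'\in K$ and $|x-x'|<\delta$. This single $\delta$ valid across the whole family $U$ is exactly what later allows one node set to serve all $f$.

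\textbf{Step 2 (A linear point-evaluation interpolant).} Cover $K$ by finitely many balls of radius $\delta$ centred at $x_1,\dots,x_m\in K$ and choose a continuous partition of unity $\{\lambda_j\}_{j=1}^m$ subordinate to this cover, so $\lambda_j\ge 0$, $\sum_j\lambda_j\equiv 1$ on $K$, and $\lambda_j(x)=0$ when $|x-x_j|\ge\delta$. Define the linear operator $Lf:=\sum_{j=1}^m f(x_j)\lambda_j$. Since only terms with $|x-x_j|<\delta$ contribute at any given $x$, Step 1 gives $\|f-Lf\|_{C(K)}<\epsilon/2$ for every $f\in U$, while each map $f\mapsto f(x_j)$ is a linear continuous functional on $C(K)$.

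\textbf{Step 3 (Replace $\lambda_j$ by Tauber--Wiener combinations and merge).} Each $\lambda_j\in C(K)$, so by the multivariate density of finite combinations $\sum_i d_i\,g(\omega_i\cdot x+\theta_i)$ in $C(K)$ (the basic TW approximation, i.e.\ Theorem~1 of \cite{Chen_ONN}, which follows from the one-dimensional density in the definition of a TW function), there exist $f$-\emph{independent} constants $d_{j,i}$, vectors $\omega_{j,i}$ and scalars $\theta_{j,i}$ with $\|\lambda_j-\sum_i d_{j,i}\,g(\omega_{j,i}\cdot x+\theta_{j,i})\|_{C(K)}<\epsilon/(2mM)$. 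Substituting into $Lf$ and relabelling the doubly-indexed nodes as a single list $(\omega_\ell,\theta_\ell)_{\ell=1}^N$, the coefficient of $g(\omega_\ell\cdot x+\theta_\ell)$ becomes $c_\ell(f)=d_{j,i}\,f(x_j)$, a fixed scalar times a point evaluation, hence a linear continuous functional on $U$. The estimate closes by the triangle inequality: $\|f-Lf\|_{C(K)}+\sum_{j=1}^m |f(x_j)|\cdot\epsilon/(2mM)<\epsilon/2+mM\cdot\epsilon/(2mM)=\epsilon$.

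\textbf{Main obstacle.} The genuinely delicate requirement is the final clause, that the $c_i$ be linear and continuous in $f$ \emph{uniformly over} $U$. A naive route---taking the best (metric) projection onto $\mathrm{span}\{g(\omega_i\cdot x+\theta_i)\}$---fails, because metric projections in the sup norm are in general neither linear nor of controllable norm as the node set grows. The partition-of-unity device sidesteps this: it confines the $f$-dependence to the evaluations $f(x_j)$, which are trivially linear and continuous, and relegates all of the approximation burden to the $f$-independent constants $d_{j,i}$. The only point requiring care is the ordering of quantifiers---$\delta$, and hence $m$ and $M$, must be fixed \emph{before} selecting the TW tolerance $\epsilon/(2mM)$---which the three-stage sequence above respects.
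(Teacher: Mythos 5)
Your proof is correct, and since the paper itself states this theorem without proof (it is quoted from \cite{Chen_ONN} purely as a supporting result in the appendix), the right comparison is with the original argument of Chen and Chen, which yours essentially reproduces: uniform boundedness and equicontinuity of $U$ from Arzel\`a--Ascoli, an $f$-independent point-evaluation interpolant $Lf=\sum_{j}f(x_j)\lambda_j$ built from a partition of unity (the original uses piecewise-multilinear hat functions on a grid, a particular instance of your $\lambda_j$), and replacement of the $\lambda_j$ by Tauber--Wiener combinations so that all $f$-dependence sits in the continuous linear functionals $f\mapsto f(x_j)$. The only blemish is a citation slip: the multivariate density of sums $\sum_i d_i\,g(\omega_i\cdot x+\theta_i)$ in $C(K)$ for compact $K\subset\mathbb{R}^n$ is Theorem~2 of \cite{Chen_ONN} (Theorem~1 is the one-dimensional statement), and it needs a short reduction argument beyond the defining property of a TW function, though invoking it as a known lemma is legitimate.
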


Next, we present a proposition demonstrating that a stacked DeepONet architecture can be recovered from its unstacked counterpart. While the result is straightforward, it is not explicitly stated in \cite{DeepONet}, and we include it here for completeness and clarity. Given a fixed input $u = \col(u(s_1),\dots,u(s_m))\in\mathbb{R}^m$, the output equation $y(z)$ of the standard unstacked DeepONet from \figurename~\ref{fig:DeePONet_unstacked} is given by:
\begin{align}\label{eqn:branch_trunk_unstacked}
    y(z) = \sum_{k=1}^p \left( \sum_{j=1}^{n_b} w_{j}^{k} \phi_b^j(u) + \xi^k\right)   \left( \sum_{r=1}^{n_t} \alpha_{r}^k \phi^r_t(z) + b^k \right), 
\end{align}
where $\phi^j_b(u)$ and $\phi^r_t(z)$ denote the outputs of the last hidden layer of the branch and trunk networks respectively, where $j = 1,\dots, n_b$, $r = 1,\dots, n_t$ and $n_b,n_t\in\mathbb{N}$ are the number of neurons in the last layer of the hidden layers of the branch and trunk networks respectively. The output layers, denoted by the gray circles in \figurename~\ref{fig:DeePONet} and \figurename~\ref{fig:DeePONet_unstacked}, always have the same number of neurons for the branch and trunk nets, denoted by $p$. In the following proposition, we show that the one-layered neural network \eqref{eqn:universal_approx_trm} from the universal approximation Theorem, is a specific instance of the unstacked DeepONet. 
\begin{proposition}\label{proposition:stacked_unstacked}
    The universal approximation theorem for operators, as stated in Theorem \ref{theorem:universal_approx}, holds for the unstacked DeepONet \eqref{eqn:branch_trunk_unstacked}.
\end{proposition}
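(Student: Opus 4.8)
The plan is to prove the proposition by a direct parameter-matching construction: I will show that the approximant guaranteed by Theorem~\ref{theorem:universal_approx} can be realized \emph{exactly} as an unstacked DeepONet of the form \eqref{eqn:branch_trunk_unstacked}, for a suitable choice of the hidden-layer widths and output-layer weights and biases. Since that approximant is already within $\epsilon$ of $G(u)(z)$, the same bound is then inherited by the unstacked DeepONet, which is precisely the claim.

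First I would fix $\epsilon>0$ and invoke Theorem~\ref{theorem:universal_approx} to obtain the integers $n,p,m$ and the constants $c_i^k,\xi_{ij}^k,\theta_i^k,\zeta_k,w_k$ and sample points $s_j$ for which \eqref{eqn:universal_approx_trm} holds. The key structural observation is that \eqref{eqn:universal_approx_trm} is a sum over $k=1,\dots,p$ of products $B_k(u)\,T_k(z)$, where the trunk factor $T_k(z)=\sigma(w_k\cdot z+\zeta_k)$ is a single $\sigma$-neuron and the branch factor $B_k(u)=\sum_{i=1}^n c_i^k\,\sigma\!\big(\sum_{j=1}^m\xi_{ij}^k u(s_j)+\theta_i^k\big)$ is a single-hidden-layer network of width $n$ acting on the sampled input $\col(u(s_1),\dots,u(s_m))$. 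This already matches the $p$-fold product-then-sum template of \eqref{eqn:branch_trunk_unstacked}, so it suffices to reproduce each $B_k$ as a branch output $b^k(u)$ and each $T_k$ as a trunk output $t^k(z)$.

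For the trunk, I would take a single hidden layer of width $n_t=p$ with neurons $\phi_t^k(z)=\sigma(w_k\cdot z+\zeta_k)$, and choose the linear output layer with weights $\alpha_r^k=\delta_{rk}$ and biases $b^k=0$, so that $t^k(z)=\phi_t^k(z)=T_k(z)$. For the branch, the only point requiring care is that the unstacked architecture shares a \emph{single} hidden layer across all $p$ output coordinates, whereas the activations $\sigma\!\big(\sum_j\xi_{ij}^k u(s_j)+\theta_i^k\big)$ carry the index $k$. I would resolve this by enlarging the shared hidden layer to width $n_b=np$, indexing its neurons by the pair $(i,k)$ via $\phi_b^{(i,k)}(u)=\sigma\!\big(\sum_{j=1}^m\xi_{ij}^k u(s_j)+\theta_i^k\big)$, and then selecting, in the linear branch output layer, only the block belonging to $k$: set the weight from hidden neuron $(i',k')$ to output $k$ equal to $c_{i'}^{k'}\delta_{k'k}$ and the bias $\xi^k=0$, which yields $b^k(u)=\sum_{i=1}^n c_i^k\,\sigma\!\big(\sum_j\xi_{ij}^k u(s_j)+\theta_i^k\big)=B_k(u)$.

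Substituting these choices into \eqref{eqn:branch_trunk_unstacked} gives
\[
y(z)=\sum_{k=1}^p b^k(u)\,t^k(z)=\sum_{k=1}^p B_k(u)\,T_k(z),
\]
which is identically the approximant in \eqref{eqn:universal_approx_trm}, so the $\epsilon$-bound carries over verbatim. I do not expect a genuine obstacle here, since the proposition is essentially an expressivity inclusion; the one substantive point is the block-diagonal selection in the branch output layer, which is exactly what lets the single shared branch network of the unstacked DeepONet emulate the per-coordinate branch subnetworks implicit in the universal approximation form (and, a fortiori, in the stacked architecture). The extra bias degrees of freedom $\xi^k$ and $b^k$ are simply set to zero.
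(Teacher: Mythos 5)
Your proposal is correct and is essentially the paper's own argument run in the opposite (but logically equivalent) direction: the paper specializes the unstacked DeepONet's parameters down to the approximant of Theorem~\ref{theorem:universal_approx}, while you realize that approximant as an unstacked DeepONet, using the identical construction --- identity trunk output layer ($\alpha_r^k=\delta_{rk}$, $b^k=0$), a shared branch hidden layer of width $n_b=np$ indexed by pairs $(i,k)$, block-diagonal branch output weights (the paper's $w^k_{r,q}=0$ for $q\neq k$), and zeroed biases. Your pair-indexing is in fact a cleaner rendering of the paper's re-indexing step $w^k_{(q-1)n+r}\mapsto w^k_{r,q}$, but it carries no new mathematical content.
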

\begin{proof}
    For the output equation of the unstacked DeepONet \eqref{eqn:branch_trunk_unstacked}, set $\alpha_r^k = 1$ if $k=r$ and $\alpha_r^k = 0$ if $k\neq r$ and set $\xi_k = b^k = 0$ for all $k = 1,\dots,p$. This can be interpreted as removing the output layer of the trunk network and removing the bias terms in the branch network. Then \eqref{eqn:branch_trunk_unstacked} becomes:
    $$
    y(z) = \sum_{k=1}^p  \sum_{j=1}^{n_b} w_{j}^{k} \phi_b^j(u)  \phi^k_t(z),
    $$
    Next, choose $\phi_t^k = \sigma(w_k \cdot u + \zeta_k)$ and $\phi_b^j = \sigma(\sum_{i=1}^m\xi_{ij} u(s_i)+ \theta_j)$, such that 
    $$
    y(z) = \sum_{k=1}^p  \sum_{j=1}^{n_b} w_{j}^{k} \sigma\left(\sum_{i=1}^m\xi_{i,j} u(s_i) + \theta_j\right)  \sigma(w_k \cdot z + \zeta_k),
    $$
    to convert the branch and trunk networks into single layer networks. Choose the number of neurons in the last layer of the branch network $n_b$ is at least larger than the number of neurons in the output layer $p$ such that there exists an $n\in\mathbb{N}$ with $n_b \geq np$. Next set $w_{j}^{k}=0$, $\xi_{i,j}=0$ and $\theta_j=0$ for all $j>np$ where $n\in\mathbb{N}$ is the largest natural number such that $np\leq n_b$ and define $w_{r,q}^k = w^k_{(q-1)n+r}$, $\xi_{i,r}^q = \xi_{i,(q-1)n+r}$ and $\theta_{r}^q = \theta_{(q-1)n+r}$, where $q=1,\dots,p$ and $r=1,\dots,n$ such that:
    \begin{align*}
        &y(z) =\\& \sum_{k=1}^p \sum_{r=1}^{n} \sum_{q=1}^{p} w_{r,q}^{k} \sigma\left(\sum_{i=1}^m \xi_{i,r}^q \cdot u(s_i) + \theta_r^q\right)  \sigma(w_k \cdot z + \zeta_k).
    \end{align*}
    This step is simply a change of index names which makes the next step easier. Finally, set $w^k_{r,q} = 0$ if $q \neq k$ and define $w_r^k = w_{r,k}^k$, such that we obtain:
    $$
        y(z) = \sum_{k=1}^p \sum_{r=1}^{n} w_{r}^{k} \sigma\left(\sum_{i=1}^m \xi_{i,r}^k \cdot u + \theta_r^k\right)  \sigma(w_k \cdot z + \zeta_k),
    $$
    which again amounts to removing certain connections between neurons. This implies that the output of the unstacked DeepONet recovers the outputs of the stacked DeepONet for a specific selection of parameters, which yields the claim via Theorem~\ref{theorem:universal_approx}.
\end{proof}
\end{document}